\documentclass{m2an} 

\usepackage{hyperref}
\usepackage{tikz}
\tikzset{draw half paths/.style 2 args={%
  decoration={show path construction,
    lineto code={
      \draw [#1] (\tikzinputsegmentfirst) -- 
         ($(\tikzinputsegmentfirst)!0.5!(\tikzinputsegmentlast)$);
      \draw [#2] ($(\tikzinputsegmentfirst)!0.5!(\tikzinputsegmentlast)$)
        -- (\tikzinputsegmentlast);
    }
  }, decorate
}}
\usepackage{stmaryrd}

\newcommand{\N}{\mathbb{N}}
\renewcommand{\P}{\mathbb{P}}
\newcommand{\Q}{\mathbb{Q}}
\newcommand{\R}{\mathbb{R}}

\newcommand{\dd}{\text{d}}

\newcommand{\card}{\text{card}}
\newcommand{\norm}[1]{\left\lVert#1\right\rVert}
\newcommand{\snorm}[1]{\left|#1\right|}

\newcommand{\edge}{\sigma}
\newcommand{\edges}{{\mathcal E}}
\newcommand{\edgesint}{{\mathcal E}_{{\rm int}}}
\newcommand{\edgesext}{{\mathcal E}_{{\rm ext}}}
\newcommand{\mesh}{{\mathcal M}}
\newcommand{\eps}{\varepsilon}

\newcommand{\bfn}{{\boldsymbol n}}
\newcommand{\bfc}{{\boldsymbol c}}
\newcommand{\bfv}{{\boldsymbol v}}
\newcommand{\bfx}{{\boldsymbol x}}

\newcommand{\gradi}{{\boldsymbol \nabla}}

\newcommand{\vfe}{X_\mesh}
\newcommand{\vfez}{X_{\mesh,0}}

\newcommand{\cregcard}{C_\edge}
\newcommand{\cregs}{C_S}
\newcommand{\cregconeI}{C_{\edges,c,1}}
\newcommand{\cregconeII}{C_{\edges,c,2}}
\newcommand{\cregtr}{C_T}

\newcommand{\cregrhoI}{C_{\rho}}
\newcommand{\creginv}{C_{inv}}

\newcommand{\miny}{\underline m_{\widehat y}}
\newcommand{\maxy}{\overline m_{\widehat y}}
\newcommand{\minx}{\underline m_{\widehat x}(\widehat y)}
\newcommand{\maxx}{\overline m_{\widehat x}(\widehat y)}

\SetSymbolFont{stmry}{bold}{U}{stmry}{m}{n}

\begin{document}

\title{Nonparametric face based finite element methods on complex hybrid three-dimensional meshes}
\date{2026}
\author{A. Brunel}
\address{Autorité de Sûreté Nucléaire (ASNR), PSN-RES/SA2I/LIE, F-13115, Saint-Paul-lez-Durance Cedex, France (aubin.brunel@asnr.fr)}
\author{J.-C. Latché$^\dagger$}
\address{Autorité de Sûreté Nucléaire (ASNR), PSN-RES/SA2I, F-13115, Saint-Paul-lez-Durance Cedex, France, deceased in October 2025}
\begin{abstract}
In this work, we propose and analyze low-order nonconforming face-based finite element methods for elliptic problems on complex hybrid three-dimensional meshes, composed of arbitrary polytopal cells.
The focus is on scalar problems, but the approach can be extended in a tensor manner to design generalizations of the classical Crouzeix-Raviart and Rannacher-Turek methods for mixed formulations.
We adopt a nonparametric construction of the local approximation spaces.
We demonstrate that it avoids the degradation of approximation properties commonly induced by parametric mappings on distorted meshes.
Using these approximation spaces, we define finite element schemes for very general cells.
We further demonstrate that, on strongly degraded meshes, an approach inspired by Discontinuous Galerkin methods enables removing consistency errors, even in the presence of potentially curved faces.
Explicit examples of the construction of the polynomial space are given for prismatic (of triangular basis) and pyramidal (of quadrilateral basis) cells.
The resulting schemes are shown theoretically and numerically to be stable and convergent, with optimal convergence rate.
\end{abstract}
\keywords{Finite element, nonparametric element, discontinuous Galerkin, general mesh, prismatic mesh, pyramidal mesh, polytopal mesh.}
\subjclass{65N12, 65N15, 65N30}

\maketitle

In two space dimensions, triangular and quadrilateral cells can be combined naturally to mesh complex geometries. 
Typically, quadrilaterals are used to efficiently cover large regions of the domain, while triangles are introduced only where additional geometric flexibility is required.
This flexibility, however, does not extend straightforwardly to three dimensions. 
Indeed, tetrahedral and hexahedral elements cannot be combined in a conforming manner, as the triangular faces of tetrahedra are geometrically incompatible with the quadrilateral faces of hexahedra. 
Consequently, meshing complex three-dimensional domains with both types of cells requires the introduction of pyramidal or prismatic elements as intermediary bricks \cite{owe-00-for,bau-14-fron}.

This issue is particularly relevant in the context of fluid flow simulations. 
For instance, in \cite{sor-2003-mul}, tetrahedral elements in a given mesh may be merged into hexahedra, triangular prisms, or quadrilateral pyramids to increase the accuracy within the boundary layers.
Another common situation arises when building extruded cells in the flow direction, which naturally introduces prismatic elements \cite{mav-97-uni}. 
Conversely, it may be desirable to locally refine hexahedral elements into tetrahedra \cite{bis-98-tet}.
Consequently, solving a problem in a domain meshed with these types of cells requires defining discrete differential operators on these volumes.

In the fluid flow community, staggered schemes have been successful owing to their intrinsic stability.
Such schemes require face-based methods to discretize the velocity elliptic operator; to do so, finite-elements (FE) methods have been extensively used on non-structured meshes.
The first category consists of conforming vector-valued elements, such as the Raviart-Thomas element \cite{rav-77-mix} and the Brezzi-Douglas-Fortin-Marini element \cite{bre-87-eff}. 
The second category includes nonconforming scalar-valued elements. 
The first element of this type was the Crouzeix-Raviart (CR) element \cite{cro-73-con}, introduced for simplices. 
Another notable example is the Rannacher-Turek (RT) element \cite{ran-92-sim}, which extends the CR element to quadrilateral and hexahedral meshes.

Owing to the popularity of these two elements, many extensions have been proposed in the literature. 
For the RT element, the degrees of freedom can be defined either as the mean values on the faces or the values at the face midpoints, leading to distinct approximation spaces. 
The nonparametric Douglas-Santos-Sheen-Ye element \cite{dou-99-non} seeks to make these two approximation spaces coincide by introducing an additional degree of freedom in the formulation. 
To reduce the number of degrees of freedom to four on quadrilaterals, a parametric counterpart was proposed in \cite{jeo-13-non}.
Another nonparametric quadrilateral element was introduced in \cite{men-18-new}, where shape functions are constructed without solving local linear systems. 
Extensions of the CR element to prismatic cells have also been proposed \cite{ape-01-cro,ape-01-non}, but these approaches are restricted to domains obtained by the extrusion of a two-dimensional surface. 
A generalization to pyramidal elements is presented in \cite{men-19-new}, but it is limited to pyramids with a parallelogram base. 
Finally, \cite{dro-21-non} proposed an extension to general polytopal cells by enriching the finite element space with shape functions defined on dual cells, at the cost of introducing additional degrees of freedom.
Outside the scope of FE methods, closely related approaches on general cells include  the Virtual Element Method \cite{bei-13-bas, can-16-non} and the Hybrid High-Order methods \cite{di-20-hyb}.

However, industrial applications still mostly resort to the CR or the RT elements, owing to their favorable balance between accuracy and computational cost. 
For these reasons, the present work aims to extend these elements for general cells (in particular, quadrilateral pyramidal or triangular prismatic ones), with possibly curved faces, still defining the degrees of freedom solely as mean values on faces. 
Here, we do not choose a parametric approach to construct the approximation space.
Parametric methods, which are widely used in industrial codes owing to their computational efficiency, rely on shape functions defined on a reference element and mapped onto physical cells via invertible geometric transformations. 
Interpolation error estimates are typically obtained using the Bramble-Hilbert lemma \cite{cia-91-han} on the reference element and then transferred to physical cells through these mappings. 
The resulting bounds depend on the norms of the mappings and their inverses, and therefore on some mesh regularity parameters. 
For the RT element, this dependency implies that convergence is guaranteed only if these parameters converge, that is, if distorted cells tend toward regular cells under mesh refinement.
This limitation can be intuitively addressed by enforcing the inclusion of $\P_1$ in the local approximation space.
However, this inclusion is not always possible within a parametric framework while keeping the dimension equal to the number of faces, as shown for hexahedra in \cite{mat-01-map}. 
Moreover, it is a challenging task to build bounded invertible mappings for general pyramidal and prismatic cells \cite{kna-01-inv}.
Therefore, we turn to nonparametric methods, which allow the construction of local approximation spaces directly on physical cells and naturally ensure the inclusion of $\P_1$. 

Using these discrete spaces, we introduce schemes to solve the Poisson problem. 
These schemes are fully general in the sense that we do not assume the types of cells present in the mesh, allowing for composite hybrid meshes. 
We show that the resulting nonconforming finite element scheme achieves optimal convergence rates provided the mesh does not contain curved faces.
In distorted meshes, quadrilateral faces may have four vertices that do not lie in a single plane, resulting in curved faces.
In such cases, the consistency error cannot be properly controlled, and modifications similar to Discontinuous Galerkin formulations \cite{arn-02-uni} are required.
Numerical experiments illustrate the accuracy of the proposed schemes in these different settings. 

Finally, we provide explicit constructions of local approximation spaces, particularly for hexahedral, (triangular) prismatic, and (quadrilateral) pyramidal elements. 
The polynomial bases are chosen to ensure unisolvency, following an approach inspired by \cite{kos-12-new}.
They are constructed using a set of unisolvent polynomials on a reference element and transported to the real element through a linear coordinate transformation. 
This approach differs from parametric methods, where shape functions are mapped to the real elements; here, basis functions are mapped instead.
We emphasize that the coordinate transformation does not need to be the mapping that defines real cells from the reference element. 
It is, in fact, not the case here, and the transformation is actually chosen to be affine.

Although the analysis presented in this article is limited to scalar elliptic operators, the present work lays the foundation for building Stokes elements on general cells.
It is indeed possible to build the velocity approximation space by extending the polynomial spaces introduced here tensorially, and using piecewise constant elements for the pressure, as done for the CR and the RT elements; this extension is postponed to future work.

The remainder of this paper is organized as follows.
First, in Section \ref{Poisson:sec:mesh}, we define the spatial discretization and the continuous and approximation spaces, as well as the specific notations.
In Section \ref{Poisson:sec:FEM}, we introduce the FE scheme for meshes with no curved faces, and the DG-like scheme for meshes with curved faces.
Furthermore, in Section \ref{Poisson:sec:build}, we propose a methodology for building low-order nonconforming FE with face unknowns on different types of cells.
Finally, we present some numerical results in Section \ref{Poisson:sec:num_res}.
We also enclose several Appendixes.
In Appendix \ref{app:mapping}, we recall the mappings available for defining different types of three-dimensional cells.
Then, in Appendix \ref{app:trace}, we adapt the proofs of a trace lemma to general meshes.
\section{Space discretization and nonconforming finite element space}
\label{Poisson:sec:mesh}

\subsection{The mesh}

\subsubsection{Definition}

Let $\Omega$ be an open, bounded subset of $\R^d$ of boundary $\partial \Omega$, with $d \in \{2,3\}$.
We consider $\mesh$ a decomposition of $\Omega$ into open subdomains $K$, called cells, that form a finite family of non-empty convex disjoint subsets of $\Omega$ such that $\overline \Omega = \cup_{K \in \mesh} \overline K$.
The cells $K \in \mesh$ are polygons if $d=2$ or general polyhedron if $d=3$.
We suppose that the cells are built from their vertices through a geometric mapping from a reference element.
More precisely, it means that for each $K \in \mesh$, there exists a regular polygon or polyhedra $\widehat K$ and a mapping $T_K: \R^d \to \R^d$ such that $T_K(\widehat K)=K$, where $\widehat K$ is unique for all the cells of the same nature (that is to say, for the same polygonal or polyhedral type).
We refer to Appendix \ref{app:mapping} for a discussion on this matter.

We denote by $\edges$ the set of edges (if $d=2$) or faces (if $d=3$) of the mesh, which are bounded subsets of surfaces of $\R^d$ delimiting the boundary $\partial K$ of a cell $K \in \mesh$, and that will be called faces in the rest of this work for simplicity.
For a cell $K \in \mesh$, we denote by $\edges(K)$ the faces of $K$ and thus have $\partial K = \cup_{\edge \in \edges(K)} \overline \edge$.
Any face $\edge \in \edges$ is either an external face if it is part of the boundary of $\Omega$, or an internal face if there exists $(K,L)\in \mesh^2$ with $K \neq L$ such that $\overline K \cap \overline L  = \overline \edge$: we denote in this case $\edge = K|L$.
The sets of external faces and internal faces are denoted, respectively, $\edgesext$ and $\edgesint$.

For $K \in \mesh$, we denote by $|K|$ the $d$-dimensional Lebesgue measure of $K$.
The mesh size is defined as $h:=\max_{K \in \mesh} h_K$, where for all $K \in \mesh$, $h_K$ is the diameter of the cell $K$.
We also denote by $\rho_K$ the diameter of the largest ball included in $\overline K$, and we denote $\widetilde{B_K}$ this ball.

For a face $\edge \in \edges$, $|\edge|$ is the $(d-1)$-dimensional Lebesgue measure of the face $\edge$.
We also define $h_\edge$ as its diameter and $\bfx_\edge$ as its mass center.
Note that for a cell $K \in \mesh$, a face $\edge \in \edges(K)$ is built as $T_K(\widehat \edge)$, where $\widehat \edge \in \widehat \edges( \widehat K)$ is the corresponding reference edge of the reference cell $\widehat K$.
Furthermore, the restriction of a $d$-dimensional $\mathbb Q_1$ mapping is a $(d-1)$-dimensional $\mathbb Q_1$ mapping, entirely defined by the vertices of the edge.
Note that the mapping is not necessarily affine.
It implies that, when $d=3$, a face $\edge$ might be a generalized quadrilateral, in the sense that its four vertices may not be co-planar.
Such an edge is consequently not planar and is subsequently called curved.
Other types of faces (planar quadrilaterals and triangles when $d=3$, or segments when $d=2$) are called non-curved in the rest of this work.

Finally, for $K \in \mesh$, $\edge \in \edges$ and $\bfx \in \edge$, $\bfn_{K,\edge}(\bfx)$ is the normal vector to the edge $\edge$ at the point $\bfx$ directed outside $K$.
If there exists $L \in \mesh$ such that $\edge = K|L$, then we sometimes use the notation $\bfn_{K,L}(\bfx)$.
If the direction does not matter, we lighten the notation to $\bfn_\edge(\bfx)$.
For a non-curved edge, these vectors are constant along $\edge$; thus, we use the notations $\bfn_{K,\edge}$, $\bfn_{K,L}$  or $\bfn_\edge$, dropping the unneeded dependency on $\bfx$.

\subsubsection{Mesh regularity} \label{subsec:mesh_regularity}

The mesh is assumed to satisfy a few regularity assumptions, given as follows:
\begin{itemize}

    \item[(H1)] There exists a constant $\cregrhoI \in \R_{>0}$ such that, for all $K \in \mesh$
    \begin{align}
         \frac{h_K}{\rho_K} \leq \cregrhoI ; \label{Poisson:eq:cregrhoI}
    \end{align}
    
    \item[(H2)] There exists a constant $\cregs \in \R_{>0}$ such that, for all $K \in \mesh$ and all $\edge \in \edges(K)$
    \begin{align}
         \frac{h_K^{d-1}}{|\edge|} \leq \cregs ; \label{Poisson:eq:cregs}
    \end{align}
    
    \item[(H3)] There exists a constant $\cregcard \in \R_{>0}$ such that, for all $K \in \mesh$
    \begin{align}
         \card(\edges(K)) \leq \cregcard.
         \label{Poisson:eq:cregcard}
    \end{align}
    
\end{itemize}
Hypotheses (H1) and (H2) are standard and related to the distortion of the cells, and hypothesis (H3) holds for the practical applications of this work because the cells considered have at most six faces.

Next, for simplicity, we suppose that the cell is star-shaped with respect to some ball.
This assumption is needed to prove technical results (namely, a trace estimate on curved faces and an interpolation error estimate).
This hypothesis may be slightly relaxed for general meshes, at the cost of additional regularity assumptions and slightly less sharp estimates.

\subsection{Continuous and discrete functional spaces}

Let $\omega$ be an open subset of $\Omega$. 
For $k \in \{0,1,2\}$, we denote by $H^k(\omega)$ the usual Sobolev spaces equipped with the norms $\norm{.}_{H^k(\omega)}$ and semi-norms $\snorm{.}_{H^k(\omega)}$.
For $k=0$, we retrieve the usual Lebesgue space, with the classical norm $\norm{.}_{L^2(\omega)}$.
Moreover, we denote by $H^1_0(\Omega)$ the space of functions of $H^1(\Omega)$ that vanish on $\partial \Omega$.

We now define the FE spaces, which are constructed locally by defining on each cell $K \in \mesh$ a triplet $(K,P_K,\Sigma_K)$, where:
\begin{itemize}
    \item $P_K$ is a polynomial space defined on $K$ such that $\P_1(K) \subset P_K \subset \P_2(K)$, that is to say, $P_K$ contains all polynomials that are at least of degree 1 and contains some polynomials that are at most of degree 2;
    \item $\Sigma_K = \{m_{K,\edge}, \edge \in \edges(K) \}$ is the set of degrees of freedom associated with the mean values of a function on $\edge \in \edges(K)$, that is $m_{K,\edge}(v)=\frac{1}{|\edge|}\int_\edge v_{|K}(\bfx) \dd \gamma(\bfx)$ for a function $v$ whose trace is well-defined and integrable on $\edge$.
\end{itemize}
We also suppose that the set of degrees of freedom $\Sigma_K$ is unisolvent with regard to the polynomial space $P_K$.
For additional details on how the basis functions are constructed, we refer to Section \ref{Poisson:sec:build}.

The global FE space is the set $\vfe \subset L^2(\Omega)$ of functions $u$ such that:
\begin{align*}
    \left\{
    \begin{aligned}
        & \forall K \in \mesh, \; u_{|K} \in P_K, \\
        & \forall \edge = K|L \in \edgesint, \; m_{K,\edge}(u) = m_{L,\edge}(u).
    \end{aligned}\right.
\end{align*}
For homogeneous Dirichlet boundary conditions, we define the corresponding FE space $\vfez$ as:
\begin{align*}
    \vfez: = \left\{ u \in \vfe; \; \forall K \in \mesh, \; \forall \edge \in \edges(K) \cap \edgesext, \; m_{K,\edge}(u)=0 \right\}.
\end{align*}
In the following, for $\edge \in \edges$, we denote by $\varphi_\edge$ the associated shape function, defined by:
\begin{align} \label{cond:basis_func}
    \left\{
    \begin{aligned}
    & \forall K \in \mesh \text{ such that } \edge \in \edges(K), \; \forall \edge' \in \edges(K), \;  m_{K,\edge'}(\varphi_\edge)=\delta_{\edge,\edge'}, \\
    & \forall K \in \mesh \text{ such that } \edge \not \in \edges(K), \; \varphi_{\edge|K} \equiv 0,
    \end{aligned}\right.
\end{align}
where the Kronecker symbol $\delta_{\edge,\edge'}$ means $\delta_{\edge,\edge'}=1$ if $\edge = \edge'$ and $\delta_{\edge,\edge'}=0$ otherwise.
For $K \in \mesh$ and $\edge \in \edges(K)$, the restriction of the shape function $\varphi_\edge$ on $K \in \mesh$ is computed by solving the linear system given by the first part of Condition \eqref{cond:basis_func}.
For a face $\edge \in \edges$, we sometimes write the degrees of freedom associated with $\edge$ simply as $m_\edge$.
On $\vfe$, this notation is indeed unambiguously defined on $\edgesext$, as well as on $\edgesint$ due to the required interfacial continuity ($\ie\; m_{K,\edge}(u)=m_{L,\edge}(u)$ for an internal face $\edge = K|L$ and $u \in \vfe$).
However, it also means that the space is nonconforming in the sense that $\vfe \not \subset H^1(\Omega)$.
Indeed, on a face $\edge \in \edgesint$, $\edge = K|L$, only the interfacial continuity $m_{K,\edge}(u) = m_{L,\edge}(u)$ is enforced for $u \in \vfe$.
In general cases, the function is, however, not continuous at each point of the face $\edge$.

Finally, for a given mesh $\mesh$, we define the broken Sobolev spaces $H^k(\mesh)$ for $k \in \{0,1,2\}$ as the space of functions that belong locally to $H^1$, that is:
\begin{align*}
    H^k(\mesh):= \left\{ u \in L^2(\Omega); \; \forall K \in \mesh, \; u \in H^k(K)\right\},
\end{align*}
and we have in particular $H^1(\Omega) \cup \vfe \subset H^1(\mesh)$.

\begin{rmrk}
Building the local approximation space using low-order polynomials enables the efficient use of low-order numerical integration methods, and motivates the restriction $P_K \subset \P_2(K)$.
Owing to the dimension of $\P_2(K)$, it also restricts the number of faces per cell to $6$ when $d=2$ and to $10$ when $d=3$.
This constraint is not relevant to the practical applications discussed here, as we only consider quadrilateral pyramidal or triangular prismatic cells. 
However, it might be problematic if a cell with a number of faces exceeding these limits is considered. 
In that case, this condition can be relaxed to $P_K \subset \P_m(K)$, for any $m \geq 2$, without affecting the theoretical analysis of the scheme, except for a modification of Lemma \ref{lem:inv}.
\end{rmrk}

\subsection{Discrete semi-norms and other notations}

Since the FE space is nonconforming, we define Sobolev semi-norms in a broken fashion, that is, for $k \in \{0,1,2\}$ and $u\in H^k(\mesh)$:
\begin{align*}
    \snorm{u}_{k,\mesh}^2:= \sum_{K \in \mesh} \snorm{u}_{H^k(K)}^2.
\end{align*}
Note that for a function $u \in H^k(\Omega)$, we have $\snorm{u}_{k,\mesh} = \snorm{u}_{H^k(\Omega)}$.

For $\edge \in \edges$ and $u \in \vfe \cup C^0(\Omega)$, we denote by $[u]_\edge$ the jump of the function $u$ across $\edge$:
\begin{align*}
    & [u]_\edge(\bfx) = u_{|K}(\bfx) - u_{|L}(\bfx), \quad &&\text{for } \edge \in \edgesint, \edge = K|L \text{ such that } \bfn_\edge = \bfn_{K,L}, \text{ and } \bfx \in \edge, \\
    & [u]_\edge(\bfx) = u_{|K}(\bfx),  \quad &&\text{for } \edge \in \edgesext, \edge \in \edges(K), \text{ and } \bfx \in \edge,
\end{align*}
and by $\{u\}_\edge$ the mean value of $u$ on the face $\edge$:
\begin{align*}
    & \{u\}_\edge(\bfx) = \frac{1}{2}(u_{|K}(\bfx) + u_{|L}(\bfx)), \quad &&\text{for } \edge \in \edgesint, \edge = K|L, \text{ and }  \bfx \in \edge, \\
    & \{u\}_\edge(\bfx) = u_{|K}(\bfx),  \quad &&\text{for } \edge \in \edgesext, \edge \in \edges(K), \text{ and } \bfx \in \edge.
\end{align*}
Note that, in this work, the direction of the jump of a function over an internal face $\edge \in \edgesint$ (that is to say, the order of the neighbouring cells) does not matter.

\section{Discrete schemes for the Poisson problem}
\label{Poisson:sec:FEM}

We now turn to discrete schemes to solve the Poisson problem, which reads, with homogeneous Dirichlet boundary conditions:
\begin{align*}
    -\Delta u = f & \quad \text{in } \Omega, \\
    u = 0 & \quad \text{on } \partial \Omega.
\end{align*}
It is well-known that, owing to the Lax-Milgram lemma, there exists an unique weak solution $u$ to this problem, that satisfies:
\begin{align} \label{Poisson:eq:weak_Poisson}
    \left\{\begin{array}{l}
        u \in H^1_0(\Omega), \\
        a(u,v) = (f,v)_{L^2(\Omega)}, \, \forall v \in H^1_0(\Omega),
    \end{array}\right.
\end{align}
where the bilinear form $a$ is defined as:
\begin{align*}
    a:\left|\begin{array}{rcl}
        H^1(\Omega) \times H^1(\Omega) &\to & \R \\
        (u,v) & \mapsto &  \int_\Omega \nabla u \cdot \nabla v \dd \bfx
    \end{array}\right.,
\end{align*}
and $(\cdot,\cdot)_{L^2(\Omega)}$ is the usual $L^2(\Omega)$ scalar product.

We present and analyze two types of schemes to solve this problem.
First, we consider a FE scheme, whose consistency error may be estimated whenever the mesh does not contain any curved faces.
However, this estimate is no longer valid when curved faces are present in the mesh, as the corresponding consistency terms cannot be properly controlled anymore.
In such cases, we introduce additional consistency and stabilization terms, leading to a formulation akin to Discontinuous Galerkin methods.

\subsection{A FE scheme for meshes with non-curved faces}

The FE scheme for the Poisson problem on meshes with non-curved faces is given as:
\begin{align} \label{Poisson:eq:disc_Poisson}
    \left\{\begin{aligned}
        & u_\mesh \in \vfez, \\
        & a_\mesh(u_\mesh,v_\mesh) = (f,v_\mesh)_{L^2(\Omega^d)}, \, \forall v_\mesh \in \vfez,
    \end{aligned}\right.
\end{align}
where the bilinear form $a_\mesh$ is defined as:
\begin{align*}
    a_\mesh:\left|\begin{array}{rcl}
        H^1(\mesh) \times H^1(\mesh) &\to & \R \\
        (u_\mesh,v_\mesh) & \mapsto &  \sum\limits_{K \in \mesh} \int_K \nabla u_\mesh \cdot \nabla v_\mesh
    \end{array}\right.,
\end{align*}
and $(\cdot,\cdot)_{L^2(\Omega^d)}$ is the usual scalar product defined on $L^2(\Omega^d)$.

For $v_\mesh \in \vfe$, we have $a_\mesh(v_\mesh, v_\mesh) = \snorm{v_\mesh}_{1,\mesh}^2$ so that the bilinear form $a_\mesh$ is coercive on $\vfe$.
Thus, the existence and uniqueness of the solution to the discrete Poisson problem \eqref{Poisson:eq:disc_Poisson} follows from the Lax-Milgram theorem and the following lemma, whose proof is recalled for completeness.
\begin{lmm}
    The application $v_\mesh \mapsto \snorm{v_\mesh}_{1,\mesh}$ is a norm on $\vfez$.
\end{lmm}
\begin{proof}
    The sub-additivity and absolute homogeneity are direct. 
    Let us prove the positive definiteness. 
    Suppose that $v_\mesh$ is such that $\snorm{v_\mesh}_{1,\mesh}=0$.
    Then, $(\gradi v_{\mesh})_{|K} = 0$ for all $K \in \mesh$, so that $v_\mesh$ is constant per cell.
    Thus, there exists $(c_K)_{K \in \mesh}$ such that $(v_{\mesh})_{|K} = c_K$ for all $K \in \mesh$.
    Let $\edge \in \edgesint, \edge = K|L$.
    There holds $c_K = m_{K,\edge}(v_\mesh) = m_{L,\edge}(v_\mesh) = c_L$, so that $v_\mesh$ is constant on $K \cap L$.
    It then follows that $v_\mesh$ is constant on the whole domain.
    Since $m_\edge(v_\mesh)=0$ for any $\edge \in \edgesext$, $v_\mesh$ is the zero function.
\end{proof}

We show that the presented scheme enjoys optimal convergence rate.
More precisely speaking, we have the following result:

\begin{thrm}[Error estimate for the FE scheme] \label{thm:ell_err_H1}
    Let $u \in H^2(\Omega)$ be the weak solution of the problem \eqref{Poisson:eq:weak_Poisson}.
    Let $u_\mesh \in \vfe$ be the discrete solution of the problem \eqref{Poisson:eq:disc_Poisson}.
    Assume that the mesh regularity assumptions from \ref{subsec:mesh_regularity} hold.
    Then, the following estimates hold:
    \begin{align*}
        & \norm{u-u_{\mesh}}_{L^2(\Omega)} + h \snorm{u-u_{\mesh}}_{1,\mesh} \leq C h^2 \snorm{u}_{H^2(\Omega)},
    \end{align*}
    where $C \in \R_{>0}$ depends only on the regularity of the mesh.
\end{thrm}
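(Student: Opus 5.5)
The plan is the classical nonconforming route: Strang's second lemma for the energy estimate, then an Aubin--Nitsche duality argument for the $L^2$ estimate. Since $a_\mesh$ is coercive on $\vfez$ with constant one for the norm $\snorm{\cdot}_{1,\mesh}$, Strang's lemma gives
\begin{align*}
\snorm{u-u_\mesh}_{1,\mesh} \leq 2\inf_{v_\mesh\in\vfez}\snorm{u-v_\mesh}_{1,\mesh} + \sup_{w_\mesh\in\vfez\setminus\{0\}}\frac{|a_\mesh(u,w_\mesh)-(f,w_\mesh)_{L^2(\Omega)}|}{\snorm{w_\mesh}_{1,\mesh}}.
\end{align*}
We bound the two terms separately.

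\textbf{Approximation term.} Take $v_\mesh = r_\mesh u$, the interpolant defined cellwise by $m_{K,\edge}(r_\mesh u)=m_\edge(u)$. It lies in $\vfez$ because traces of $H^1_0$ functions have single-valued face means that vanish on external faces. Since $\P_1(K)\subset P_K$ and $r_K$ reproduces $P_K$, we get $u-r_K u = (u-p)-r_K(u-p)$ for every $p\in\P_1(K)$. Bounding $r_K$ requires the following ingredients:
\begin{itemize}
\item a trace inequality $\norm{v}_{L^1(\edge)}\lesssim |\edge|^{1/2}\big(h_K^{-1/2}\norm{v}_{L^2(K)}+h_K^{1/2}\snorm{v}_{H^1(K)}\big)$, from Appendix \ref{app:trace}, using star-shapedness;
\item an inverse/stability bound on the local basis, $\snorm{\varphi_\edge}_{H^k(K)}\lesssim h_K^{d/2-k}$, which follows from Lemma \ref{lem:inv} and unisolvence together with (H1)--(H3);
\item a Bramble--Hilbert (Dupont--Scott) estimate on the star-shaped cell $K$, choosing $p$ to be the averaged Taylor polynomial.
\end{itemize}
Combining these yields $\snorm{u-r_K u}_{H^k(K)}\lesssim h_K^{2-k}\snorm{u}_{H^2(K)}$ for $k=0,1$.

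\textbf{Consistency term.} Integrate by parts cellwise with $f=-\Delta u$:
\begin{align*}
a_\mesh(u,w_\mesh)-(f,w_\mesh)_{L^2(\Omega)} = \sum_{\edge\in\edges}\int_\edge \nabla u\cdot\bfn_\edge\,[w_\mesh]_\edge\dd\gamma.
\end{align*}
Because faces are non-curved, $\bfn_\edge$ is constant, and $\int_\edge [w_\mesh]_\edge=0$ by the continuity of means (or the zero boundary means). We may therefore subtract the constant $m_\edge(\nabla u)\cdot\bfn_\edge$ from the flux. On each side, the factor $[w_\mesh]_\edge$ can be replaced by $w_{\mesh|K}-m_{K,\edge}(w_\mesh)$. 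Cauchy--Schwarz plus the trace inequality then give the following bounds:
\begin{itemize}
\item for the flux, $\norm{\nabla u-m_\edge(\nabla u)}_{L^2(\edge)}\lesssim h_K^{1/2}\snorm{u}_{H^2(K)}$, via Poincaré on $K$;
\item for the test function, $\norm{w-m_{K,\edge}(w)}_{L^2(\edge)}\lesssim h_K^{1/2}\snorm{w}_{H^1(K)}$.
\end{itemize}
Summing over faces with (H3) bounds the consistency term by $Ch\snorm{u}_{H^2}\snorm{w_\mesh}_{1,\mesh}$. This is exactly the step that fails for curved faces, where $\bfn_\edge$ varies along $\edge$.

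\textbf{Duality.} For the $L^2$ bound, let $z\in H^2\cap H^1_0$ solve $-\Delta z = u-u_\mesh$. We assume elliptic regularity, $\snorm{z}_{H^2}\lesssim\norm{u-u_\mesh}_{L^2}$; this is implicit in the statement, for instance for a convex $\Omega$. Write
\begin{align*}
\norm{u-u_\mesh}^2 = a_\mesh(u-u_\mesh,z) - \sum_\edge\int_\edge\nabla z\cdot\bfn_\edge[u-u_\mesh]_\edge.
\end{align*}
Then insert $r_\mesh z$, using the Galerkin relation $a_\mesh(u-u_\mesh,r_\mesh z)=\sum_\edge\int_\edge\nabla u\cdot\bfn_\edge[r_\mesh z]_\edge$. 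Every resulting term is a product of two $O(h)$ factors:
\begin{itemize}
\item $\snorm{u-u_\mesh}_{1,\mesh}\,\snorm{z-r_\mesh z}_{1,\mesh}$;
\item the two jump terms, each bounded exactly as in the consistency step, since $[u]=[z]=0$ and both jumps have zero face mean.
\end{itemize}
This yields $h^2\snorm{u}_{H^2}$.

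\textbf{Main obstacle.} I expect the hardest step to be the mesh-uniform interpolation estimate on general, non-affine-equivalent cells. Its constant must come only from (H1)--(H3) and star-shapedness, not from a reference map. That in turn requires the basis-function bound $\snorm{\varphi_\edge}_{H^1(K)}\lesssim h_K^{d/2-1}$, which depends on the conditioning of the unisolvent construction. I would first try to obtain it by scaling $K$ to unit diameter and invoking Lemma \ref{lem:inv} together with a compactness-style uniform unisolvence assumption.
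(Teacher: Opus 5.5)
Your proposal is correct and follows the paper's route: the paper's proof is the second Strang lemma fed by an interpolation estimate (Theorem \ref{lem:interpolation}, which it only cites) and the face-consistency bound of Theorem \ref{thm:bound}, followed by an Aubin--Nitsche duality argument. Your version is somewhat more careful than the paper's on four points:
\begin{itemize}
\item[(i)] Your consistency bound uses the trace inequality plus Poincar\'e on the cell, rather than the paper's face Poincar\'e--Wirtinger plus inverse estimate, so it also covers the non-polynomial $z-r_\mesh z$ needed in the duality step.
\item[(ii)] You keep the boundary faces, which the paper's proof of Theorem \ref{thm:bound} silently drops.
\item[(iii)] You flag that the stability bound on the local basis needs a uniform conditioning of the local unisolvence. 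This does not follow from (H1)--(H3) alone, despite your bullet's wording, but the paper's citation of Brenner--Scott leaves the same dependence implicit.
\item[(iv)] You flag that the duality step needs $H^2$-regularity of the dual problem, which the paper also leaves implicit.
\end{itemize}
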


Once an interpolation error estimate and a consistency error estimate are obtained, the proof of this theorem follows from the second Strang lemma and a Aubin-Nitsche duality argument (see for instance Chapter 5 of \cite{cia-91-han}).

\begin{rmrk}
The previous inequality may be compared with the one in Theorem 1 of \cite{ran-92-sim}.
While that result is stated for the Stokes problem, the proof for any elliptic problem follows the same line of thought, splitting the global error into interpolation and consistency errors.
The main difference is the constant in front of the solution norm, which scales as $C(h+\sigma_h)h$ rather than $Ch^2$, implying that convergence in the $H^1$ semi-norm is only reached if $\sigma_h$ tends to $0$.
Here, $\sigma_h$ is a mesh regularity parameter that does not vanish with the mesh size; rather, this parameter vanishes only if the mesh sequence is asymptotically regular (as noted in Remark 3 of the same paper).
Note that one would obtain a similar error estimate by employing parametric methods to build finite elements on general meshes, in which case this assumption would not be reasonable.
The scheme presented here is thus more robust in the sense that it guarantees the convergence of a discrete solutions sequences $(u_{\mesh}^{(m)})_{m \in \N}$ associated with a sequence of meshes $(\mesh^{(m)})_{m \in \N}$, provided some mesh regularity parameters remain uniformly bounded with respect to $m \in \N$.
Note that this deficiency of the parametric RT method is solely attributable to the interpolation error estimate. 
Specifically, it arises from the lack of inclusion $\P_1(K) \subset P_K$ for face-based parametric methods on general non-simplicial cells. 
\end{rmrk}

\subsubsection{Interpolation error estimate}

First, we recall the existence of an operator $\Pi: H^1(\Omega) \to \vfe$, which, as stated in \cite{bre-08-mat} Section 4, satisfies the estimate below.
\begin{thrm}[Interpolation error estimate]\label{lem:interpolation}
Assume that the mesh regularity assumptions from \ref{subsec:mesh_regularity} hold.
Then, for $u \in H^2(\Omega)$ and $K \in \mesh$, we have:
\begin{align}
    \norm{u-\Pi u}_{L^2(K)}+h\snorm{u- \Pi u}_{H^1(K)} \leq C_{int} h_K^2 \snorm{u}_{H^2(K)},
\end{align}
where $C_{int} \in \R_{>0}$ depends only on the regularity of the mesh. 
\end{thrm}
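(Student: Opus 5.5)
The natural candidate for $\Pi$ is the canonical face-mean interpolant: for $u \in H^1(\Omega)$ and each $\edge \in \edges$, set $m_\edge(\Pi u) = \frac{1}{|\edge|}\int_\edge u \dd\gamma$. Since $u \in H^1(\Omega)$ has a single-valued trace on each internal face, $\Pi u$ satisfies the interfacial continuity constraint and hence lies in $\vfe$. On each cell $K$, unisolvence of $\Sigma_K$ with respect to $P_K$ makes the local interpolant $\Pi_K u = \sum_{\edge \in \edges(K)} m_\edge(u)\,\varphi_{\edge|K}$ well defined. The key structural fact is that $\P_1(K)\subset P_K$ and $\Pi_K$ reproduces $P_K$, so $\Pi_K p = p$ for every affine $p$. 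I will avoid any Bramble--Hilbert argument on a reference element, because the mapping $T_K$ may be badly distorted. Instead I will work directly on the physical cell.

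Fix $K$ and let $p \in \P_1(K)$ be the averaged Taylor polynomial of $u$ over the ball with respect to which $K$ is star-shaped; (H1) says this ball has radius comparable to $h_K$. The Dupont--Scott / Bramble--Hilbert estimate on star-shaped domains then gives
\begin{align*}
\norm{u-p}_{L^2(K)} + h_K\snorm{u-p}_{H^1(K)} \le C h_K^2 \snorm{u}_{H^2(K)},
\end{align*}
with $C$ depending only on the chunkiness parameter $\cregrhoI$. We then split $u - \Pi_K u = (u-p) - \Pi_K(u-p)$, and it remains to bound $\Pi_K w$ for $w = u-p$.

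Write $\Pi_K w = \sum_{\edge} m_\edge(w)\varphi_\edge$. First, by Cauchy--Schwarz and (H2), $|m_\edge(w)| \le |\edge|^{-1/2}\norm{w}_{L^2(\edge)} \le C h_K^{-(d-1)/2}\norm{w}_{L^2(\edge)}$. Second, the trace inequality on general (possibly curved) faces, adapted in Appendix \ref{app:trace}, gives $\norm{w}_{L^2(\edge)}^2 \le C\big(h_K^{-1}\norm{w}_{L^2(K)}^2 + h_K\snorm{w}_{H^1(K)}^2\big)$. Together these yield $|m_\edge(w)| \le C h_K^{-d/2} h_K^2 \snorm{u}_{H^2(K)}$. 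Third, I need the scaling bounds on the shape functions
\begin{align*}
\norm{\varphi_\edge}_{L^2(K)} \le C h_K^{d/2}, \qquad \snorm{\varphi_\edge}_{H^1(K)} \le C h_K^{d/2-1}.
\end{align*}
The gradient bound follows from the $L^2$ one by an inverse inequality on $\P_2(K)$ (this is the place where Lemma \ref{lem:inv}, presumably the inverse estimate, enters). Finally, summing over at most $\cregcard$ faces by (H3) gives the stated bound with $h_K$ in place of $h$, which is stronger than the claim.

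The main obstacle is the uniform $L^2$ bound on $\varphi_\edge$. It is not automatic: it requires the local unisolvence to be uniformly well conditioned, i.e.\ a bound of the form $\norm{q}_{L^\infty(K)} \le C\max_\edge |m_\edge(q)|$ on $P_K$ with $C$ independent of $K$. My first attempt would be to transport to a rescaled cell $h_K^{-1}(K-\bfx_K)$, which ranges over a compact family of shapes under (H1)--(H3). There, continuity of the determinant of the moment matrix, together with unisolvence being maintained on this family, would give a uniform bound by compactness. If the paper's constructive bases in Section \ref{Poisson:sec:build} provide this conditioning directly, I would cite them instead; otherwise it has to be taken as an implicit part of the mesh regularity assumption.
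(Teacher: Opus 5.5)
Your route is essentially the one the paper relies on. The paper gives no proof of its own, only a pointer to Brenner--Scott, Section 4. Their local interpolation estimate uses exactly your splitting $u-\Pi_K u=(u-p)-\Pi_K(u-p)$, with $p$ an averaged Taylor polynomial on a cell star-shaped with respect to a ball, followed by a bound on the norm of the local interpolant; the paper cites Verf\"urth for tracking the constants. Your transcription to face-mean degrees of freedom via (H2) and the trace inequality is fine. Two points need correcting.

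\textbf{The uniform bound on $\varphi_\edge$.} Your compactness argument cannot be run from (H1)--(H3) plus cellwise unisolvence. Invertibility of the moment matrix is an open condition. Uniformly shape-regular cells, each carrying a unisolvent element, can therefore accumulate at a configuration where the matrix is singular, and then $h_K^{-d/2}\norm{\varphi_\edge}_{L^2(K)}$ is unbounded. The paper itself shows that shape regularity does not exclude this: $\Q_1^{rot}$ fails on the square rotated by $45^\circ$. For the adapted spaces of Section~\ref{Poisson:sec:build}, it explicitly leaves unisolvence on physical cells unproved, offering only a perturbation heuristic from $\widetilde K$ and numerics. So the bound $\norm{q}_{L^\infty(K)}\le C\max_\edge |m_{K,\edge}(q)|$ on $P_K$, uniform in $K$, must be an extra hypothesis; your third option is the correct one.

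This is not a weakness relative to the paper. The Brenner--Scott constant depends on the norm of the local interpolant, which chunkiness controls only for affine-equivalent families, and these nonparametric elements are not affine-equivalent. The phrase ``depends only on the regularity of the mesh'' thus silently includes the same assumption. Separately, Lemma~\ref{lem:inv} as stated only controls the top-order seminorm of $p\in\P_m$, since its proof uses that the $m$-th derivatives are constant. The inequality $\snorm{q}_{H^1(K)}\le Ch_K^{-1}\norm{q}_{L^2(K)}$ on $\P_2$ that you need is standard under (H1), but it is not that lemma.

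\textbf{$h_K$ versus $h$.} Since $h_K\le h$, your bound $\norm{u-\Pi u}_{L^2(K)}+h_K\snorm{u-\Pi u}_{H^1(K)}\le Ch_K^2\snorm{u}_{H^2(K)}$ is weaker than the displayed one, not stronger. The display would give $\snorm{u-\Pi u}_{H^1(K)}\le Ch_K^2h^{-1}\snorm{u}_{H^2(K)}$. That form cannot hold with a constant depending only on (H1)--(H3), because these hypotheses allow graded meshes. For a quadratic $u\notin P_K$, $u-\Pi_K u$ is not constant (otherwise $u\in P_K$). For homothetic copies of one cell carrying the transported space, as the constructions of Section~\ref{Poisson:sec:build} produce, the ratio $\snorm{u-\Pi_K u}_{H^1(K)}/(h_K\snorm{u}_{H^2(K)})$ is constant, so it cannot be $O(h_K/h)$ as $h_K/h\to 0$. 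The statement should therefore be read with $h_K$. That is what you prove, and after summation over cells it is what the proof of Theorem~\ref{thm:ell_err_H1} uses; just do not claim it implies the literal display.
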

By following the steps of \cite{ver-99-err, ver-99-not}, it is possible to trace how the constant $C_{int}$ depends on various mesh regularity parameters.

\subsubsection{Consistency error estimate}

Let us now take a deeper look at the bilinear form $a_\mesh$.
Its definition introduces a consistency error, coming from the non-conformity of the discrete space $\vfe$ with respect to $H^1(\Omega)$.
Indeed, recall that the starting point of the derivation of this discrete second-order differential operator is the divergence formula.
It yields, in particular, for $K \in \mesh$,  $u \in H^2(\Omega) \cap H^1_0(\Omega)$ and $\Psi \in C_c^\infty(\Omega)$:
\begin{align*}
   - \int_K \Delta u(\bfx) \Psi(\bfx) \dd \bfx 
   = \int_K \nabla u (\bfx) \cdot \nabla \Psi (\bfx) \dd \bfx - \sum_{\edge \in \edges(K)} \int_\edge \Psi(\bfx) \nabla u (\bfx) \cdot \bfn_{K,\edge}(\bfx) \dd \gamma (\bfx).
\end{align*}
The first term of the right-hand side of the previous equality yields the definition of the bilinear form $a_\mesh$.
In addition, it also contains a sum on the faces of the cell $K$, which does not appear in the definition of the discrete operator.
Gathering those terms over all cells $K \in \mesh$, we obtain, after a reordering, a sum involving jumps of the function $u$ on the faces $\edge \in \edgesint$, that is:
\begin{align} \label{eq:jump_fe}
    \sum_{K \in \mesh}\sum_{\edge \in \edges(K)} \int_\edge \Psi(\bfx) \nabla u (\bfx) \cdot \bfn_{K,\edge}(\bfx) \dd \gamma (\bfx) = \sum_{\edge \in \edgesint} \bfn_\edge \cdot \int_\edge [\Psi]_\edge(\bfx) \nabla u (\bfx) \dd \gamma (\bfx).
\end{align}
The regularity of the function $\Psi$ implies the cancellation of this sum, since the jumps over the faces vanish.
It would also be the case for a function $\Psi$ in a discrete space conforming with respect to $H^2(\Omega) \cap H^1_0(\Omega)$, or such that the degrees of freedom are defined as the averages (over both sides of a face) of the normal values at the faces.
It is, however, not the case of the space $X_\mesh$, and the finite-element scheme \eqref{Poisson:eq:disc_Poisson} is not consistent.
Nonetheless, we show in the theorem below that this error term can be properly controlled when dealing with meshes that contain only non-curved faces.

In the rest of this section, we drop the dependency on $\bfx$ as well as the differential elements in each integral for clarity.

\begin{thrm} \label{thm:bound}
    Assume that the mesh regularity assumptions from \ref{subsec:mesh_regularity} hold,{ and that the mesh only contains non-curved faces.}
    Then, for $v \in \vfez $ and $u \in H^2(\Omega)$, we have:
    \begin{align*}
        \sum_{K \in \mesh} \sum_{\edge \in \edges(K)} \int_\edge v \nabla u \cdot \bfn_{K,\edge}
        \leq C_{side} h \snorm{u}_{H^2(\Omega)} \snorm{v}_{1,\mesh},
    \end{align*}
    where $C_{side} \in \R_{>0}$ depends only on the regularity of the mesh.
\end{thrm}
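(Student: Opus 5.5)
We would follow the classical Crouzeix--Raviart consistency argument and exploit two facts: on a non-curved face the normal $\bfn_{K,\edge}$ is constant, and functions of $\vfez$ have continuous face means.

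\textbf{Step 1: rewrite the sum with jumps.} By \eqref{eq:jump_fe}, together with the boundary convention for jumps, the left-hand side equals
\begin{align*}
\sum_{\edge \in \edges} \bfn_\edge \cdot \int_\edge [v]_\edge \nabla u .
\end{align*}
For $v \in \vfez$, every face satisfies $\int_\edge [v]_\edge = 0$. On internal faces this holds because $m_{K,\edge}(v)=m_{L,\edge}(v)$, and on external faces because $m_\edge(v)=0$. Since $\bfn_\edge$ is constant on a non-curved face, we may therefore subtract any constant vector $\bfc_\edge$ from $\nabla u$. We take $\bfc_\edge$ to be the mean of $\nabla u$ over one adjacent cell $K$, or equivalently the face mean. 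Going back to the cellwise form, each face contribution splits into two pieces of the type
\begin{align*}
\int_\edge \bigl(v_{|K} - m_\edge(v)\bigr)\,\bigl(\nabla u - \bfc_K\bigr)\cdot \bfn_{K,\edge}.
\end{align*}
Here $\bfc_K$ is the cell mean of $\nabla u$, and $m_\edge(v)$ is the common face mean. This splitting works because both $\int_\edge (v_{|K}-m_\edge(v)) = 0$ and $\bfn$ is constant.

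\textbf{Step 2: bound each cell--face term.} We apply Cauchy--Schwarz on $\edge$ and control each factor by a scaled trace inequality on $K$. Such an inequality follows from (H1)--(H2) and the star-shapedness assumption, using the trace lemma adapted in Appendix~\ref{app:trace}:
\begin{align*}
\norm{w}_{L^2(\edge)}^2 \leq C\bigl(h_K^{-1}\norm{w}_{L^2(K)}^2 + h_K \snorm{w}_{H^1(K)}^2\bigr).
\end{align*}
For $w = \partial_i u - (\bfc_K)_i$, the Poincaré inequality on the convex cell $K$ gives $\norm{w}_{L^2(K)} \le C h_K \snorm{u}_{H^2(K)}$. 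Hence $\norm{\nabla u-\bfc_K}_{L^2(\edge)} \le C h_K^{1/2}\snorm{u}_{H^2(K)}$.

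For $w = v_{|K}-m_\edge(v)$, we use a Poincaré-type inequality with face mean, $\norm{w}_{L^2(K)} \le C h_K \snorm{v}_{H^1(K)}$. This is valid on convex cells satisfying (H1)--(H2) and follows from the same trace estimate applied to $v - \frac{1}{|K|}\int_K v$. Together with the fact that $\snorm{w}_{H^1(K)}=\snorm{v}_{H^1(K)}$, it yields $\norm{v_{|K}-m_\edge(v)}_{L^2(\edge)} \le C h_K^{1/2}\snorm{v}_{H^1(K)}$.

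Multiplying the two bounds, each term is at most $C h_K \snorm{u}_{H^2(K)}\snorm{v}_{H^1(K)}$.

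\textbf{Step 3: sum.} We sum over $\edge\in\edges(K)$, using (H3) to bound the number of faces by $\cregcard$. We then sum over $K$ and apply the discrete Cauchy--Schwarz inequality, with $h_K\le h$, to obtain the claimed bound $C_{side}h\snorm{u}_{H^2(\Omega)}\snorm{v}_{1,\mesh}$.

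The main obstacle is Step 2. The trace inequality and the face-mean Poincaré inequality must hold with constants depending only on $\cregrhoI,\cregs,\cregcard$ for general polyhedral cells, with no reference affine map available. We would first try to prove the trace inequality via the divergence identity with the field $\bfx-\bfx_0$, where $\bfx_0$ is the center of the ball $\widetilde{B_K}$. On a planar face, $(\bfx-\bfx_0)\cdot\bfn_{K,\edge}$ is a positive constant, at least $\rho_K/2$. Applying this identity to $w^2$ gives the estimate with constants controlled by $h_K/\rho_K$ and the face-area ratio (H2).

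It is precisely constancy of $\bfn_{K,\edge}$ that fails on curved faces. There one can no longer subtract $\bfc_\edge\cdot\bfn$ from the integrand, since $\int_\edge [v]\,\bfn(\bfx)\neq 0$ in general, and this explains the hypothesis of the theorem.
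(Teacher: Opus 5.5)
Your proof is correct. It shares the paper's skeleton: rewriting with jumps, subtracting constants (allowed by the constant normal and the continuity of face means), Cauchy--Schwarz on each face, Lemma~\ref{lem:trace} with a Poincar\'e inequality on the convex cell for $\nabla u$, and summation via (H3). The genuine difference is in how you control the factor involving $v$.

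The paper bounds $\int_\edge [v-m_\edge(v)]_\edge^2$ by a Poincar\'e--Wirtinger inequality on the face $\edge$ itself. It then applies the trace lemma to $\nabla v$ and absorbs the resulting term $h_M^2\snorm{v}_{H^2(M)}^2$ on each neighbouring cell $M$ with the inverse estimate of Lemma~\ref{lem:inv}. That step uses $v_{|M}\in P_M\subset\P_2(M)$, so $C_{side}$ depends on the polynomial degree through $\creginv$. You instead apply the trace lemma directly to $v_{|K}-m_\edge(v)$, combined with a face-mean Poincar\'e inequality on $K$. This needs neither an inverse estimate nor a Poincar\'e inequality on faces. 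Your consistency bound therefore holds for every $v\in H^1(\mesh)$ whose face means are continuous across internal faces and vanish on external ones, whatever the degree of $P_K$.

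Your route can be shortened further. Since $m_\edge(v)$ is the best $L^2(\edge)$ constant approximation of $v_{|K}$, one has $\norm{v_{|K}-m_\edge(v)}_{L^2(\edge)}\le\norm{v_{|K}-\bar v_K}_{L^2(\edge)}$, where $\bar v_K$ is the mean of $v$ on $K$. Lemma~\ref{lem:trace} plus the Poincar\'e inequality on $K$ then conclude without invoking (H2).

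Two further points favour your version.
\begin{itemize}
\item Your one-sided splitting with a cell-dependent constant $\bfc_K$ gives the cellwise bound $h_K\snorm{u}_{H^2(K)}\snorm{v}_{H^1(K)}$ directly. The paper's jump-based Cauchy--Schwarz step, with a single constant $\bfc_\edge$, instead pairs $\snorm{v}_{H^1(L)}$ with $\snorm{u}_{H^2(K)}$ across a face.
\item You treat external faces explicitly through $m_\edge(v)=0$. The paper's displayed identity sums over $\edgesint$ only, yet for $v\in\vfez$ and $u\in H^2(\Omega)$ the boundary terms $\int_\edge v\nabla u\cdot\bfn_{K,\edge}$ vanish only in mean.
\end{itemize}
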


Proving this result requires two technical lemmas. 
The first is a trace lemma, which provides bounds for the norms of a function on the boundary of a domain in terms of the norms of the same function defined within the domain.
Specifically, in this context, for $K \in \mesh$ and $\edge \in \edges(K)$, the $L^2(\edge)$ norm of a regular function can be controlled by its $L^2(K)$ and $H^1(K)$ norms.
\begin{lmm}[Trace lemma] \label{lem:trace}
    Assume that the mesh regularity assumptions from \ref{subsec:mesh_regularity} hold.
    Let $K \in \mesh$.
    Then, for all $f \in H^1(K)$, $K \in \mesh$ and $\edge \in \edges(K)$, the following inequality holds:
    \begin{align} \label{eq:trace}
        \norm{f}_{L^2(\edge)}^2 \leq C_T h_K^{-1} \left(\norm{f}_{L^2(K)}^2 + h_K^2 \norm{\nabla f}_{L^2(K)}^2 \right),
    \end{align}
    where $C_T$ depends only on the mesh regularity.
\end{lmm}

\begin{rmrk}
    This type of inequality is well known in the literature, with various proofs relying on slightly different mesh hypotheses.
    It is proved, for instance, in Section 1.2 of \cite{di-20-hyb}, for simplicial cells or polytopal cells whose faces may be subdivided into simplicial subfaces.
    In \cite{can-21-hp}, a proof is provided for curved faces, as long as the cell is star-shaped and respects additional regularity hypotheses.
    However, the star-shaped hypothesis might be relaxed: it suffices to assume that a truncated cone exists inside the cell for each of its faces. 
    We refer to Appendix \ref{app:trace} for more details on this result, whose proof is adapted from the one in \cite{ey-09-con}.
    Note that such a trace lemma for curved faces is not required for the analysis of the present finite element scheme.
    However, it is required if one would like to carry out the analysis of the DG-like scheme that will be introduced in the next section.
\end{rmrk}

Next, we provide an inverse estimate, which allows controlling the Sobolev semi-norm of a polynomial on a cell $K \in \mesh$ by another Sobolev semi-norm on the same cell.
\begin{lmm}[Inverse estimate]\label{lem:inv}
Let $m \geq 1$ and $p \in \P_m(\Omega).$
For all $K \in \mesh$ and all $i \in \{0,m\}$, there holds:
\begin{align} \label{eq:inv}
    \snorm{p}_{H^m(K)} \leq \creginv h_K^{i-m} \snorm{p}_{H^i(K)},
\end{align}
where $C_I \in \R_{>0}$ depends only on $i$, $m$ and on the regularity of the mesh.
\end{lmm}
\begin{proof}
    Since $p \in \P_m(\Omega)$, its $m$-th derivative is constant and we have:
    \begin{align*}
        \snorm{p}^2_{H^m(K)} = \frac{|K|}{|\widetilde{B_K}|} \snorm{p}^2_{H^m(\widetilde{B_K})}.
    \end{align*}
    Considering the mapping $T_{\widetilde{B_K}}: B(0,1) \to \widetilde{B_K}$ defined by $T_{\widetilde{B_K}}(\bfx):= \rho_K \bfx + \bfx_{\widetilde{B_K}}$, then by a direct adaptation of Lemma 12.1 of \cite{ern-21-fin}, there holds:
    \begin{align*}
        \snorm{p}_{H^m(\widetilde{B_K})} \leq \widetilde{C_I} \rho_K^{i-m} \snorm{p}_{H^i(\widetilde{B_K})},
    \end{align*}
    with $\widetilde{C_I}$ that depend only on $i$ and $m$.
    Using $|\widetilde{B_K}| = (4 /3)^{d-2} \pi \rho_K^d$, the inclusion $\widetilde{B_K} \subset K$ and the mesh regularity assumption (H1) concludes the proof.
\end{proof}

Using these two results, we are now in position to prove Theorem \ref{thm:bound}:
\begin{proof}[Proof of Theorem \ref{thm:bound}]
For all $\edge \in \edges$, let ${\bf c}_{\edge}$ be a constant vector.
Since the jumps across the faces of the mesh for a function of $\vfe$ vanish, we have, following the same reordering of the sum as above:
\begin{align*}
    \sum_{K \in \mesh} \sum_{\edge \in \edges(K)} \int_\edge v \nabla u \cdot \bfn_{K,\edge} 
    = & \sum_{\edge \in \edgesint} \bfn_\edge \cdot \int_\edge  [v]_\edge \nabla u \\
    = & \sum_{\edge \in \edgesint} \bfn_\edge \cdot \int_\edge [v]_\edge \left( \nabla u - {\bf c}_{\edge} \right) \\
    = & \sum_{\edge \in \edgesint} \bfn_\edge \cdot \int_\edge [v - m_\edge(v)]_\edge \left( \nabla u - {\bf c}_{\edge} \right) .
\end{align*}
The last equality holds, observing that $[v]_\edge = [v - m_\edge(v)]_\edge$ for any $\edge \in \edges$, since the jump across a face of a constant value vanishes.
Next, for a face $\edge = K|L$, we have, thanks to the Cauchy-Schwarz inequality:
\begin{align*}
    \left( \int_\edge [v - m_\edge(v)]_\edge \left( \nabla u - {\bf c}_{\edge} \right) \right)^2 
    \;\leq \;
    I_v I_{\gradi u},
\end{align*}
where
\begin{align*}
    I_v  \;:=\; \int_\edge [v - m_\edge(v)]_\edge^2, \qquad
    I_{\gradi u} \;:=\;  \int_\edge | \nabla u-  {\bf c}_{\edge} |^2.
\end{align*}
Let us first bound the integral $I_v$.
Splitting the jump of $v - m_\edge(v)$ on each neighbouring cell, and using a Poincaré-Wirtinger Lemma on the face $\edge$, there holds:
\begin{align*}
    I_v 
    \;\leq\; C_P h_\edge^2 \sum_{M = K,L} \snorm{v_{|M}}_{H^1(\edge)}^2,
\end{align*}
where $C_P \in \R_{>0}$ is related to the Poincaré-Wirtinger constant.
Then, as a consequence of the trace inequality \eqref{eq:trace} and the inverse inequality \eqref{eq:inv} we have:
\begin{align*}
    \snorm{v_{|M}}_{H^1(\edge)}^2 
    \; \leq \; \cregtr h_M^{-1} \left( \snorm{v}^2_{H^1(M)} + h_M^2 \snorm{v}^2_{H^2(M)} \right) \; \leq \; \cregtr (1 + \creginv^2) h_M^{-1} \snorm{v}^2_{H^1(M)}.
\end{align*}
Since $h_\edge \leq h_M$, it follows that:
\begin{align*}
    I_v \;\leq\; C_P \cregtr (1+ \creginv^2) \sum_{M = K|L} h_M  \snorm{v}^2_{H^1(M)}.
\end{align*}
We now turn to the integral $I_{\gradi u}$.
On account of the trace inequality \eqref{eq:trace}, there holds, for a cell $K \in \mesh$ such that $\edge \in \edges(K)$: 
\begin{align*}
    I_{\gradi u}
    \;\leq\; \cregtr h_K^{-1} \left( \norm{\nabla u -  {\bf c}_{\edge}}^2_{L^2(K)} + h_K^2 \norm{ \gradi(\nabla u)}^2_{L^2(K) } \right).
\end{align*}
Choosing ${\bf c}_{\edge}$ as the mean of $\nabla u$ on $K$, leads to, thanks to the classical Poincaré-Wirtinger inequality:
\begin{align*}
    I_{\gradi u}
    \; \leq \; \cregtr h_K^{-1} \left( C_P h_K^2 \snorm{u}^2_{H^2(K)} + h_K^2 \snorm{u}^2_{H^2(K) } \right) 
    \; \leq \; \cregtr (1 + C_P) h_K \snorm{u}^2_{H^2(K)},
\end{align*}
where $C_P \in \R_{>0}$ is related to the Poincaré constant.
Since the same bound can be obtained on the other cell $L \in \mesh$ such that $\edge \in \edges(L)$ and $L \neq K$, we can gather the bounds on $I_v$ and $I_{\gradi u}$ to obtain:
\begin{align*}
    \left( \int_\edge \left( [v - m_\edge(v)]_\edge \right) \left( \nabla u - {\bf c}_{\edge} \right) \right)^2
    \leq \cregtr^2 (1+C_P) C_P (1+\creginv^2) \sum_{M = K,L} h_M^2 \snorm{v}^2_{H^1(M)} \snorm{u}^2_{H^2(M)}.
\end{align*}
By the bound of the number of faces of the cells by \eqref{Poisson:eq:cregcard} of Hypothesis (H3), we get:
\begin{align*}
    \sum_{K \in \mesh} \sum_{\edge \in \edges(K)} \int_\edge v \nabla u \cdot \bfn_{K,\edge} 
    \;\leq\; C_{side} \sum_{K \in \mesh} h_K \snorm{u}_{H^2(K)} \snorm{v}_{H^1(K)},
\end{align*}
where $C_{side} = \cregcard \sqrt{ \cregtr^2 (1+C_P) C_P (1+\creginv^2) }$.
The desired result then follows directly from a Cauchy-Schwarz inequality.
\end{proof}

\subsection{A DG-like scheme for meshes with curved faces }
\label{Poisson:sec:DG}

We now suppose, in the rest of this section, that there exist some curved faces in the mesh, that is, there exist some $\edge \in \edges$ where $\bfn_\edge(\bfx)$ is not constant along $\edge$.
Then, for some functions $v \in \vfez$ and $u \in H^2(\Omega)$, one may only write:
\begin{align} \label{eq:jump_dg}
    \sum_{K \in \mesh} \sum_{\edge \in \edges(K)} \int_\edge v(\bfx) \nabla u(\bfx) \cdot \bfn_{K,\edge}(\bfx) \dd \gamma(\bfx)
    = & \sum_{\edge \in \edgesint} \int_\edge  [v]_\edge(\bfx) \nabla u(\bfx)\cdot \bfn_\edge(\bfx) \dd \gamma(\bfx). 
\end{align}
Note that the difference between Equations \eqref{eq:jump_fe} and \eqref{eq:jump_dg} arise from the treatment of the normal vectors in the integral on the right-hand side.
For a curved face $\edge \in \edges$, since the normal vector $\bfn_\edge(\bfx)$ is not constant for all $\bfx \in \edge$, it must be integrated along the faces.
To carry the proof of the bound on the consistency term as previously, one would need the identity $\int_\edge [v]_\edge(\bfx) \bfn_\edge(\bfx) \dd \gamma (\bfx) = 0$ to hold.
It is a much stricter condition than the previously needed requirement $\int_\edge [v]_\edge(\bfx) \dd \gamma (\bfx) = 0$.
In general cases, this property unfortunately does not hold.
It is thus not possible to bound the consistency error in the same fashion as previously.
Furthermore, numerical experiments (see Section \ref{Poisson:sec:num_res}) confirm that the FE scheme degenerates when confronted with such meshes and does not converge when the mesh step vanishes.

A possible way to recover optimal convergence is to introduce consistency and stabilization terms to help get some control of this consistency error, which yields a DG-like scheme.
Taking as a model the right-hand side of the previous identity, we first introduce a consistency term in the shape of the bilinear form $c_\mesh$, defined as:
\begin{align*}
   c_\mesh:\left|\begin{array}{rcl}
        H^1(\mesh) \times H^2(\mesh) &\to & \R \\
        (u_\mesh,v_\mesh) & \mapsto &  -\sum\limits_{\edge \in \edges} \int_\edge [v_\mesh]_\edge \{\nabla u_\mesh\}_\edge \cdot \bfn_{K,\edge}
    \end{array}\right..
\end{align*}
Remark that for $u_{\mesh} \in \vfe$, $\nabla u_\mesh(\bfx)$ is not defined along any $\edge \in \edgesint$, so that the bilinear form is defined as the mean value across the face $\{ \nabla u_\mesh \}_\edge$.
Adding this term to the bilinear form $a_\mesh$ leads to a consistent scheme, but yields a non-symmetrical linear system, since the bilinear form $c_\mesh$ is itself non-symmetrical.
To recover symmetry, we also add a symmetrized version of this term.
Then, to recover coercivity, it is required to add a stabilization term $s_\mesh$, which in our case takes the following form:
\begin{align*}
    s_\mesh:\left|\begin{array}{rcl}
        H^1(\mesh) \times H^1(\mesh) &\to & \R \\
        (u_\mesh,v_\mesh) & \mapsto &  \sum\limits_{\edge \in \edgesint} \frac{\gamma_\edge}{h_\edge}\int_\edge [u_\mesh]_\edge [v_\mesh]_\edge
    \end{array}\right.,
\end{align*}
where $\gamma_\edge \in \R_{>0}$ is a user-fixed parameter.
Note that coercivity is only recovered if for any $\edge \in \edges$, $\gamma_\edge$ is above a lower bound that depends on local mesh regularity.
To simplify the formulation of the problem, one may consider a uniform parameter throughout all the faces, that is, there exists $\gamma \in \R_{>0}$ such that $\gamma_\edge = \gamma$ for all $\edge \in \edges$.

The DG-like formulation of the problem finally reads:
\begin{align} \label{Poisson:eq:DG_Poisson}
    \left\{\begin{aligned}
        & u_\mesh \in \vfez, \\
        & \widetilde a_{\mesh}(u_\mesh,v_\mesh) = (f,v)_{L^2(\Omega^d)}, \, \forall v_\mesh \in \vfez,
    \end{aligned}\right.
\end{align}
where $\widetilde a_{\mesh}$ is the modified symmetrical bilinear defined as:
\begin{align*}
    & \widetilde a_{\mesh}(u_\mesh,v_\mesh) = a_\mesh(u_\mesh, v_\mesh) + c_\mesh(u_\mesh,v_\mesh) + c_\mesh(v_\mesh,u_\mesh) + s_\mesh(u_\mesh,v_\mesh).
\end{align*}

For a function $u \in H^2(\Omega) \cap H^1_0(\Omega)$, the jumps of $u$ and of $\gradi u \cdot \bfn$ over the internal faces vanish by continuity of the function, so that, for $v_\mesh \in \vfe$, there holds $s_\mesh(u,v_\mesh) = c_\mesh(u,v_\mesh)=0$.
It means that if $u \in H^2(\Omega) \cap H^1_0(\Omega)$ is the solution of weak Poisson problem \eqref{Poisson:eq:weak_Poisson}, then, by an integration by parts, there holds:
\begin{align*}
    \widetilde a_{\mesh}(u,v_\mesh) \; = \; a_\mesh(u,v_\mesh) + c_\mesh(u, v_\mesh) \; = \; (f,v_\mesh)_{L^2(\Omega)},
\end{align*}
and the discrete DG-like scheme \eqref{Poisson:eq:DG_Poisson} is consistent.

The optimal order of convergence is recovered using this scheme even when dealing with meshes that contain curved faces, as stated in the following theorem (whose proof may be adapted from the one in \cite{arn-02-uni}): 
\begin{thrm}[Error estimate for the DG-like scheme] \label{thm:ell_err_H1_DG}
    Suppose that $f \in L^2(\Omega)$.
    Let $u \in H^2(\Omega)$ be the weak solution of the problem \eqref{Poisson:eq:weak_Poisson}.
    Let $u_\mesh \in \vfe$ be the discrete solution of the problem \eqref{Poisson:eq:DG_Poisson}.
    Then, the following estimate holds:
    \begin{align*}
        & \norm{u-u_{\mesh}}_{L^2(\Omega)} + h\snorm{u-u_{\mesh}}_{1,\mesh} 
        \leq C h^2 \snorm{u}_{H^2(\Omega)}, 
    \end{align*}
    where $C \in \R_{>0}$ depends only on the regularity of the mesh.
\end{thrm}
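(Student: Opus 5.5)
We follow the classical interior penalty (SIPG) analysis of \cite{arn-02-uni}, working on $H^2(\mesh)+\vfe$ with the mesh-dependent energy norm
\begin{align*}
\norm{w}_{DG}^2 := \snorm{w}_{1,\mesh}^2 + \sum_{\edge \in \edges} \frac{\gamma_\edge}{h_\edge} \norm{[w]_\edge}_{L^2(\edge)}^2 + \sum_{\edge\in\edges} h_\edge \norm{\{\nabla w\}_\edge}_{L^2(\edge)}^2 .
\end{align*}
On $\vfez$ this is a norm by the norm lemma above, since its first term already is. The argument proceeds in four steps.

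\textbf{Step 1: coercivity on $\vfez$.} For $v_\mesh \in \vfez$, Young's inequality gives
\begin{align*}
|2c_\mesh(v_\mesh,v_\mesh)| \le \eps \sum_\edge h_\edge \norm{\{\nabla v_\mesh\}_\edge}^2_{L^2(\edge)} + \eps^{-1}\sum_\edge h_\edge^{-1}\norm{[v_\mesh]_\edge}_{L^2(\edge)}^2 .
\end{align*}
We bound the first sum by applying the trace Lemma \ref{lem:trace} to $\nabla v_{\mesh|K}$; here the version valid for curved faces (Appendix \ref{app:trace}) is needed. Then the inverse estimate Lemma \ref{lem:inv} with $m=2$, $i=1$, together with $h_\edge\le h_K$ and (H3), yields
\begin{align*}
\sum_\edge h_\edge\norm{\{\nabla v_\mesh\}_\edge}^2_{L^2(\edge)} \le C\,\snorm{v_\mesh}_{1,\mesh}^2 .
\end{align*}
Choosing $\eps = 1/(2C)$ and $\gamma_\edge$ above a threshold depending only on $\cregtr,\creginv,\cregcard$ and the ratios $h_K/h_\edge$ (bounded by (H2)) gives $\widetilde a_\mesh(v_\mesh,v_\mesh)\ge \alpha \norm{v_\mesh}_{DG}^2$. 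A detail to settle here: the boundary faces must be included in the jump sum of $s_\mesh$, or else handled separately. Since $c_\mesh$ sums over all of $\edges$ while $s_\mesh$ only over $\edgesint$, I would include $\edgesext$ in the penalty as well; alternatively, one uses the zero face means and Poincaré--Wirtinger on the face.

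\textbf{Step 2: boundedness and the Strang/Céa argument.} By Cauchy--Schwarz, $\widetilde a_\mesh(w,v)\le M\norm{w}_{DG}\norm{v}_{DG}$ on the whole sum space. By the consistency identity shown just before the theorem, $\widetilde a_\mesh(u-u_\mesh,v_\mesh)=0$ for all $v_\mesh\in\vfez$; this needs $[u]_\edge=0$ and $[\nabla u\cdot\bfn_\edge]_\edge=0$ on each face, which holds for curved faces too. Galerkin orthogonality then gives
\begin{align*}
\norm{u-u_\mesh}_{DG}\le (1+M/\alpha)\norm{u-\Pi u}_{DG} .
\end{align*}

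\textbf{Step 3: interpolation in the DG norm.} We bound $\norm{u-\Pi u}_{DG}$ by Theorem \ref{lem:interpolation} for the broken seminorm. For the face terms, we write $[u-\Pi u]_\edge$ as a sum of one-sided traces and apply Lemma \ref{lem:trace} on each side, which gives $h_\edge^{-1}\norm{(u-\Pi u)_{|K}}^2_{L^2(\edge)}\le C h_K^2\snorm{u}^2_{H^2(K)}$. The normal-derivative term is handled by the trace lemma applied to $\nabla(u-\Pi u)$. This requires $\snorm{u-\Pi u}_{H^2(K)}\le \snorm{u}_{H^2(K)}+\snorm{\Pi u}_{H^2(K)}$ together with a bound of $\snorm{\Pi u}_{H^2(K)}$. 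For the latter, I would insert the $\P_1$ Taylor-type approximant $p_K\in\P_1(K)\subset P_K$: then $\snorm{\Pi u}_{H^2(K)} = \snorm{\Pi u - p_K}_{H^2(K)}$, and the inverse estimate combined with the interpolation bound controls it by $C\snorm{u}_{H^2(K)}$. Summing over $K$ gives $\norm{u-u_\mesh}_{DG}\le Ch\snorm{u}_{H^2(\Omega)}$, which is the $H^1$ part of the theorem.

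\textbf{Step 4: the $L^2$ estimate by duality.} Let $z\in H^2\cap H^1_0$ solve $-\Delta z=u-u_\mesh$, with $\snorm{z}_{H^2}\le C\norm{u-u_\mesh}_{L^2}$; this requires elliptic regularity, e.g.\ a convex $\Omega$, an assumption to be made explicit. Since $\widetilde a_\mesh$ is symmetric, adjoint consistency gives
\begin{align*}
\norm{u-u_\mesh}^2_{L^2}=\widetilde a_\mesh(u-u_\mesh,z) ,
\end{align*}
and this identity is precisely where the symmetrized term $c_\mesh(v_\mesh,u_\mesh)$ is used. Galerkin orthogonality lets us subtract $\Pi z$, and boundedness then yields $\norm{u-u_\mesh}^2_{L^2}\le M\norm{u-u_\mesh}_{DG}\norm{z-\Pi z}_{DG}\le Ch^2\snorm{u}_{H^2}\norm{u-u_\mesh}_{L^2}$.

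I expect the main obstacle to be Step 1 and Step 3 on curved faces: the trace inequality must hold with uniform constants on non-planar faces (star-shapedness, Appendix \ref{app:trace}), and the penalty threshold must depend only on the regularity constants.
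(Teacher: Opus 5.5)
Your proposal is correct and follows the route the paper has in mind: the paper gives no proof of its own beyond deferring to the interior-penalty analysis of \cite{arn-02-uni}, which is exactly your coercivity, boundedness, Galerkin-orthogonality and Aubin--Nitsche argument, and you correctly flag two points the paper leaves implicit, namely that $s_\mesh$ must also penalize the faces of $\edgesext$ (because $c_\mesh$ sums over all of $\edges$) and that the duality step needs $H^2$ elliptic regularity (e.g.\ convex $\Omega$). Of your two options for the boundary faces, keep the first: the fallback via zero face means and Poincar\'e--Wirtinger only bounds the boundary contributions of $2c_\mesh(v_\mesh,v_\mesh)$ by an $\mathcal{O}(1)$ multiple of $\snorm{v_\mesh}_{1,\mesh}^2$, and since $\gamma_\edge$ does not act on those faces there is nothing to absorb them into, so that route does not yield coercivity.
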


\begin{rmrk}
The scheme given here is known as the interior penalty (IP) method \cite{nit-71-var, dou-76-int}, but other DG schemes may be obtained using other consistency or stabilization terms.
We refer to \cite{arn-02-uni} for a more detailed discussion on the different DG methods, their stability condition, and their scope of application.
\end{rmrk}

\section{A methodology to build a finite element on different types of cells}
\label{Poisson:sec:build}

We propose in this section a methodology to build, for any cell $K \in \mesh$, a polynomial space $P_K$ so that: (i) it is consistent with the theoretical framework developed above, namely $\P_1(K) \subset P_K \subset \P_2(K)$ and (ii) it yields an unisolvent FE with a set of face-based degrees of freedom.
A necessary condition to obtain unisolvency is that the dimension of the polynomial space coincides with the number of faces of the cell $K$. 
Since there exists only $d+1$ polynomials of degree 1 or less, it means, for most cell types, that the approximation space must be enlarged with other polynomials.

Note that for each cell type, the set of polynomial sets proposed below is only one choice among many.
Formal computations on a reference configuration indeed show that several choices satisfy the above requirements.
In our case, the selection of the additional polynomials was guided by the numerical conditioning of the associated rigidity matrices.
This property was only verified numerically and is not supported by theoretical claims.

The technique introduced below is inspired by the one proposed in \cite{kos-12-new} for quadrilateral nonparametric FE.
The approach can be divided into two steps.
First, define the polynomial space on a simple reference cell.
Second, obtain the polynomial space on a real cell thanks to a change of coordinates.
More precisely speaking, we proceed as follows:
\begin{itemize}
    \item On a geometrical reference cell $\widehat K$:
    \begin{itemize}
        \item first, define the polynomials $(\widehat p_i)_{0 \leq i \leq d}$ of degree one or less, that is $\widehat p_0(\widehat \bfx) = 1$ and $\widehat p_i( \widehat\bfx) = \widehat{x_i}$ for $1 \leq i \leq d$;
        \item then, for each missing $j$ such as $d+1 \leq j \leq \card(\edges(K))$, define a polynomial $\widehat p_j \in \P_2(\widehat K)$ such that $(\widehat K, P_{\widehat K}, \sigma_{\widehat K})$ is unisolvent, where $P_{\widehat K} =: \{ \widehat p_i, 0 \leq i \leq \card(\edges(K))\}$.
    \end{itemize}  
    \item On the real cell $K$:
    \begin{itemize}
        \item define, from the reference cell, a linear change of coordinates and its matrix $R_K$;
        \item build the polynomial space $P_K:= \text{span}\{ p_i, 0 \leq i \leq \card(\edges(K)) \}$ where the polynomial $p_i$ are defined as $p_i (\bfx) = \widehat p_i \circ R_K^{-1}(\bfx)$.
    \end{itemize}
\end{itemize}
It is important to note that $\P_1(\widehat K) \subset P_{\widehat K}$ and $\text{span}( \P_1(\widehat K) \circ R_K^{-1}) = \text{span}(\P_1(K))$, since the change of coordinates is affine, so that we indeed have $\P_1(K) \subset P_K$.
The space $P_K$ is thus implemented as:
\begin{align*}
    P_K = \text{span}\Big\{\P_1(K)\Big\} \oplus \text{span} \Big\{ \widehat p_j \circ R_{K}^{-1}, \, d+1 \leq j \leq \card (\edges(K)) \Big\}.
\end{align*}
Once the polynomial space $P_K$ is defined, the shape functions $(\varphi_\edge)_{\edge \in \edges(K)}$ can be computed on $K$ by solving the usual linear problem associated with $P_K$ and $(m_{K,\edge})_{\edge \in \edges(K)}$.

Let us give the rationale behind this method.
On a general cell $K \in \mesh$, it is challenging to define in an elementary way an unisolvent FE.
It is, however, always possible to identify a simple polynomial space $P_{\widehat K}$, on a reference cell $\widehat K$ with simple geometry, resulting in an unisolvent reference FE.
Then, one may define a mapping $T_{\widetilde K}$ of the form $T_{\widetilde K}(\widehat \bfx) = R_K \widehat \bfx + \bfc_K$, that maps the reference cell $\widehat K$ to an "affine approximation" $\widetilde K$ of the real cell $K$, akin to the construction given in \cite{aru-95-geo} for quadrilaterals.
Here, $\bfc_K$ is a vector that depends on the real-world cell $K$, and may be chosen, for instance, as the mass center $\bfx_K$ of the cell.
This construction induces an unisolvent FE on $\widetilde K$ by considering the polynomial space $P_K$ together with its associated degrees of freedom.
If $\widetilde K$ is sufficiently close to $K$, then the linear system characterizing unisolvency on $K$ can be viewed as a perturbation of the corresponding system on $\widetilde K$.
It is thus reasonable to expect this system to be invertible as well.
Proving unisolvency on a real cell by formally computing the solution of the linear system is possible; it is, however, a fastidious work that is not pursued here.
Nonetheless, the numerical experiments we provide below testify to the robustness of this approach.

This methodology may be applied to build a nonparametric FE space from existing methods that may not be well-defined on general cells.
As a first example, one may consider the classical RT element on quadrilateral cells.
This element consists of a rotation of 45 degrees of the classical polynomial space $\Q_1$ to obtain the space $\Q_1^{rot}$, given as $\Q_1^{rot}=\{1, x, y, x^2 - y^2\}$, which yields an unisolvent FE on the reference square $[0,1]^2$ with face-based degrees of freedom.
However, the unisolvence of this FE fails if the reference square is itself rotated by 45 degrees.
It can be easily shown that cells having one diagonal parallel to one of the axes shares this defect.
As shown in Figure \ref{Poisson:fig:FEM_Quad}, one solution is to adjust the space $\Q_1^{rot}$ to the cell under consideration.

\begin{figure}[hbt!]
    \centering
    \begin{tikzpicture}
        \newcommand{\KRefLeftBottom}{-2,-1}
        \newcommand{\KRefLeftTop}{-2,1}
        \newcommand{\KRefRightBottom}{0,-1}
        \newcommand{\KRefRightTop}{0,1}
        
        \draw [line width=1.2pt] (\KRefLeftBottom) -- (\KRefLeftTop) -- (\KRefRightTop) -- (\KRefRightBottom) -- (\KRefLeftBottom) -- cycle ;
        \foreach \x in {(\KRefLeftBottom), (\KRefLeftTop), (\KRefRightBottom), (\KRefRightTop)}{
        \fill \x circle[radius=2pt];}

        \draw (-1, -2.5) node {$\widehat K$} ;
        \draw (\KRefLeftBottom) node[below left] {
            $\widehat \bfv_0 = (0,-1)$
        };
        \draw (\KRefRightBottom) node[below right] {
            $\widehat \bfv_1 = (1,0)$
        };
        \draw (\KRefRightTop) node[above right] {
            $\widehat \bfv_2 = (1,1)$
        };
        \draw (\KRefLeftTop) node[above left] {
            $\widehat \bfv_3 = (0,1)$
        };
        \draw (-1,0) node[below] {$(1/2,1/2)$} ;
        \draw (-1,1) node[above] {$(1/2,1)$} ;
        \draw (0,0) node[right] {$(1,1/2)$} ;
        
        \foreach \x in {(-1,0), (-1,1), (0,0) }{
        \fill \x circle[radius=2pt];}

        \draw [dashed, ->, orange] (-1,0) -- (0,0) node[midway, above] {$\widetilde{\boldsymbol e_1}$};
        \draw [dashed, ->,orange] (-1,0) -- (-1,1)  node[midway, left] {$\widetilde{\boldsymbol e_2}$};
        
        \path[->, every node/.style={font=\sffamily\small}]
         (1.4,0) edge[bend left] node [above] {$T_{\widehat K}$} (3.4,0);
        
        \newcommand{\RealLeftBottom}{4.598269515458673, -1.4827722283113838}
        \newcommand{\RealLeftTop}{4.131282217350893, 0.3260734985006058}
        \newcommand{\RealRightBottom}{6.716922863405994,-0.9523874705820474}
        \newcommand{\RealRightTop}{5.353525403784439,1.6090862003928252}
        
        \newcommand{\AffLeftBottom}{4.822372055837117,-1.6709292143521046}
        \newcommand{\AffRightBottom}{6.492820323027551,-0.7642304845413266}
        \newcommand{\AffRightTop}{5.577627944162883,1.4209292143521042}
        \newcommand{\AffLeftTop}{3.9071796769724494,0.5142304845413266}
        
        \draw [densely dotted,line width=1.2pt] (\RealLeftBottom) -- (\RealLeftTop) -- (\RealRightTop) -- (\RealRightBottom) -- (\RealLeftBottom) ;
        
        \draw [line width=1.2pt] (\AffLeftBottom) -- (\AffLeftTop) -- (\AffRightTop) -- (\AffRightBottom) -- (\AffLeftBottom) ;
                
        \foreach \x in {(\RealLeftBottom), (\RealLeftTop), (\RealRightBottom), (\RealRightTop)}{
        \fill \x circle[radius=2pt];}    
        
        \foreach \x in {(5.2, -0.125), (6.035224133595217,0.3283493649053891), (4.742403810567666,0.9675798494467152)}{
        \fill \x circle[radius=2pt];}   
        
        \draw (5.5,-2.5) node {$\widetilde K$ (in bold lines) and $K$ (in dotted lines) } ;
        \draw (\RealLeftBottom)     node[below left] {$\bfv_0$} ;
        \draw (\RealLeftTop)        node[below right] {$\bfv_3$} ;
        \draw (\RealRightBottom)    node[below right] {$\bfv_1$} ;
        \draw (\RealRightTop)       node[above] {$\bfv_2$} ;
        \draw (5.2, -0.125) node[below] {${\bf c}_K$} ;
        \draw (6.035224133595217,0.3283493649053891) node[right] {$\dfrac{\bfv_1 + \bfv_2}{2}$} ;
        \draw (4.742403810567666,0.9675798494467152) node[above left] {$\dfrac{\bfv_2 + \bfv_3}{2}$} ;

        \draw [dashed, line width=1.2pt, ->, orange] (5.2, -0.125) -- (6.035224133595217,0.3283493649053891)  node[midway, below right, xshift=-0.2cm] {$\boldsymbol R_1$};
        \draw [dashed, ->, orange] (5.2, -0.125) -- (4.742403810567666,0.9675798494467152)  node[midway, right, yshift=0.2cm] {$\boldsymbol R_2$};
        
    \end{tikzpicture}
    \caption{The change of coordinates $T_{\widehat K}$ is defined so that the vector $\widetilde{\boldsymbol e_1}:= (1/2,0)$ is sent on $\boldsymbol R_1 = \frac{\bfv_1 + \bfv_2}{2} - \bfc_K$ and the vector $\widetilde{\boldsymbol e_2}:= (0,1/2)$ is sent on $\boldsymbol R_2 = \frac{\bfv_2 + \bfv_3}{2} - \bfc_K$, where $\bfc_K$ is the mass center of the cell $K$. 
    In particular, the space $P_K$ yields an unisolvent FE on $\widetilde K$, which is the (affine) parallelogram approximation of $K$ that shares its faces midpoints. }
    \label{Poisson:fig:FEM_Quad}
\end{figure}

For a set of different natures of cells, we give some sensible choices for the reference cell $\widehat K$, the polynomials $(\widehat p_j)_{d+1 \leq j \leq \card(\edges(K))}$ and the changes of coordinates.
We emphasize that these choices are not unique; in particular, the set of polynomials might be obtained by trial and error, aiming for unisolvency on the reference cell.
Furthermore, this approach is not limited to the types of cells considered in this work.

\begin{rmrk}[Difference with parametric methods]
At first glance, the difference between the approach presented here and parametric methods may not be apparent.
Both methods, in fact, rely on mapping polynomials from a reference cell.
In the case of parametric methods, a set of basis functions is first defined on this reference cell.
Then, shape functions are determined on the same reference cell using the chosen degrees of freedom and are combined with the aforementioned mapping to build the shape functions on the physical cell.

In the present approach, a set of basis functions is also defined on a reference cell.
Shape functions are, in contrast, not computed on the reference cell; rather, the basis functions are directly mapped on the real cell, where the shape functions are then constructed.
Consequently, linear systems must be solved for the latter on each real cell, rather than only once on the reference cell in the case of (iso-)parametric method.
The obvious drawback is an increase in the numerical cost of the scheme.
However, because the size of each of the linear systems is equal to the number of faces of the considered cell, the additional cost is, in most cases, negligible.
It is especially true when using this method to solve time-dependent problems.
In such cases, the computed shape functions can be stored in memory so that the supplement price is paid only once, at the initial time step.
Furthermore, the computation of the basis functions is completely independent across cells and is thus embarrassingly parallel.
\end{rmrk}

\subsection{Hexahedral case}

On hexahedra, we may take the polynomial space of the Rannacher-Turek element:
\begin{align*}
    P_{\widehat K}:=\text{span}\left\{ 1, x, y, z, x^2 - y^2, y^2 - z^2 \right\}, 
\end{align*}
which yields an unisolvent FE on the reference cell $\widehat K = [0,1]^3$ of vertices $\hat \bfv_0 = (0,0,0)$, $\hat \bfv_1 = (0,1,0)$, $\hat \bfv_2 = (0,0,1)$, $\hat \bfv_3 = (1,0,0)$, $\hat \bfv_4 = (1,1,0)$, $\hat \bfv_5 = (1,0,1)$, $\hat \bfv_6 = (0,1,1)$ and $\hat \bfv_7 = (1,1,1)$.
For a cell $K$ whose vertices are ordered in a similar way, the change of coordinates is defined by the matrix $R_K$ whose columns are $\boldsymbol R_{K,1}, \boldsymbol R_{K,2}$ and $\boldsymbol R_{K,3}$ and are defined by:
\begin{align*}
    \bfc_K & = \frac{1}{8} \sum_{i = 0}^7 \bfv_i, &
    \boldsymbol R_{K,1} & = \frac{1}{4} ( \bfv_3 + \bfv_4 + \bfv_5 + \bfv_7 ) - \bfc_K, & \\
    \boldsymbol R_{K,2} & = \frac{1}{4} ( \bfv_1 + \bfv_4 + \bfv_6 + \bfv_7 )-\bfc_K, &
    \boldsymbol R_{K,3} & = \frac{1}{4} ( \bfv_2 + \bfv_5 + \bfv_6 + \bfv_7 )-\bfc_K. &
\end{align*}

\begin{figure}[htb!]
    \centering
    \begin{tikzpicture}
    \newcommand{\vzero}{-0.2,0}
    \newcommand{\vthree}{2.2,0}
    \newcommand{\vfour}{3,0.5}
    \newcommand{\vone}{1,0.5}
    \newcommand{\vtwo}{0,2}
    \newcommand{\vfive}{2,2}
    \newcommand{\vseven}{3,2.7}
    \newcommand{\vsix}{1,2.7}

    \newcommand{\cK}{1.4,1.3}
    \newcommand{\MidRight}{2.55,1.3}
    \newcommand{\MidBack}{2,1.6}
    \newcommand{\MidTop}{1.5,2.35}

    \draw[line width=1.2pt] (\vzero) -- (\vthree);
    \draw[line width=1.2pt] (\vthree) -- (\vfour);
    \draw[dashed, line width=1.2pt] (\vfour) -- (\vone);
    \draw[line width=1.2pt, dashed] (\vone) -- (\vzero);

    \draw[line width=1.2pt] (\vtwo) -- (\vfive);
    \draw[line width=1.2pt] (\vfive) -- (\vseven);
    \draw[line width=1.2pt] (\vseven) -- (\vsix);
    \draw[line width=1.2pt] (\vsix) -- (\vtwo);

    \draw[line width=1.2pt] (\vzero) -- (\vtwo);
    \draw[line width=1.2pt] (\vthree) -- (\vfive);
    \draw[line width=1.2pt] (\vfour) -- (\vseven);
    \draw[dashed, line width=1.2pt] (\vone) -- (\vsix);

    \foreach \x in {
        (\vzero), (\vthree), (\vfour), (\vone), (\vtwo), (\vfive), (\vseven), (\vsix),
        (\cK), (\MidRight), (\MidBack), (\MidTop)
    }{
        \fill \x circle[radius=2pt];
    }

    \draw (\vzero) node[below left] {$v_0$};
    \draw (\vthree) node[below right] {$v_3$};
    \draw (\vfour) node[below right] {$v_4$};
    \draw (\vone) node[left] {$v_1$};
    \draw (\vtwo) node[above left] {$v_2$};
    \draw (\vfive) node[above] {$v_5$};
    \draw (\vseven) node[above right] {$v_7$};
    \draw (\vsix) node[above left] {$v_6$};

    \draw (\cK) node[below] {$\bfc_K$};

    \draw[dashed, line width=1.2pt, ->, orange] (\cK) -- (\MidRight)
        node[right] {$\boldsymbol R_{K,1}$};
    \draw[dashed, line width=1.2pt, ->, orange] (\cK) -- (\MidBack)
        node[above right] {$\boldsymbol R_{K,2}$};
    \draw[dashed, line width=1.2pt, ->, orange] (\cK) -- (\MidTop)
        node[above, yshift = 0.2cm] {$\boldsymbol R_{K,3}$};
\end{tikzpicture}
    \caption{An hexahedral cell $K$, with its vertex and the vectors used to define the linear change of coordinate}
    \label{fig:transfo_hex}
\end{figure}

\subsection{Quadrilateral pyramidal case}

The reference polynomial span for quadrilateral pyramidal cells is chosen as follows:
\begin{align*}
    \widehat P_K:=\text{span}\left\{ 1, x, y, z, x^2 - y^2\right\},
\end{align*}
which yields an unisolvent FE on the reference pyramid $\widehat K$ whose basis vertices are $\hat \bfv_0 = (0,0,0)$, $\hat \bfv_1 =(1,0,0)$, $\hat \bfv_2 =(1,1,0)$ and $\hat \bfv_3 =(0,1,0)$ and the vertex of its apex is $\hat \bfv_4 =(0,0,1)$.
For a cell $K$ of vertices $(\bfv_i)_{0 \leq i \leq 4}$, the change of coordinates is defined by the matrix $R_K$ whose columns are $\boldsymbol R_{K,1}, \boldsymbol R_{K,2}$ and $\boldsymbol R_{K,3}$ and are defined by:
\begin{align*}
    \bfc_K & = \frac{1}{4} \sum_{i = 0}^3 \bfv_i, & 
    \boldsymbol R_{K,1} & = \frac{1}{2} ( \bfv_1 + \bfv_2 ) - \bfc_K, &
    \\
    \boldsymbol R_{K,2} & = \frac{1}{2} ( \bfv_2 + \bfv_3 ) - \bfc_K , &
    \boldsymbol R_{K,3} & = \bfv_4 - \bfc_K. &
\end{align*}

\begin{figure}[hbt!]
    \centering
    \begin{tikzpicture}
    \newcommand{\LeftBack}{1.2,1}
    \newcommand{\LeftFront}{0,0.5}
    \newcommand{\RightBack}{4.8,1}
    \newcommand{\RightFront}{3,0}
    \newcommand{\Top}{2,3}

    \newcommand{\Mid}{2.25,0.625}
    \newcommand{\MidRight}{3.9,0.5}
    \newcommand{\MidBack}{3,1}

    \draw[line width=1.2pt] (\LeftFront) -- (\RightFront) -- (\RightBack) ;
    \draw [dashed, line width=1.2pt](\RightBack) -- (\LeftBack) -- (\LeftFront) ;
    \draw[line width=1.2pt] (\Top) -- (\LeftFront) ;
    \draw[line width=1.2pt] (\Top) -- (\RightFront) ;
    \draw[line width=1.2pt] (\Top) -- (\RightBack) ;
    \draw [dashed, line width=1.2pt](\Top) -- (\LeftBack) ;

    \foreach \x in {
        (\LeftBack), (\LeftFront), (\RightBack), (\RightFront), (\Top), (\Mid), (\MidRight), (\MidBack)
    }{
        \fill \x circle[radius=2pt];
    }

    \draw (\LeftFront) node[below left] {$v_0$};
    \draw (\RightFront) node[below] {$v_1$};
    \draw (\RightBack) node[right] {$v_2$};
    \draw (\LeftBack) node[below] {$v_3$};
    \draw (\Top) node[above] {$v_4$};
    \draw (\Mid) node[below] {$\bfc_K$} ;

    \draw[dashed, line width=1.2pt, ->, orange] (\Mid) -- (\MidRight)
        node[below right] {$\boldsymbol R_{K,1}$};
    \draw[dashed, line width=1.2pt, ->, orange] (\Mid) -- (\MidBack)
        node[above right] {$\boldsymbol R_{K,2}$};
    \draw[dashed, line width=1.2pt, ->, orange] (\Mid) -- (\Top)
        node[above right] {$\boldsymbol R_{K,3}$};
    
    \end{tikzpicture}
    \caption{A quadrilateral pyramidal cell $K$, with its vertex and the vectors used to define the linear change of coordinate}
    \label{fig:transfo_pyr}
\end{figure}

\subsection{Triangular prismatic case}

For triangular prismatic cells, one may take $P_{\widehat K}$ as follows:
\begin{align*}
    P_{\widehat K}:=\text{span}\left\{ 1, x, y, z, x^2 - y^2\right\},
\end{align*}
which yields an unisolvent FE on the reference prism $\widehat K$ whose vertices are $\hat \bfv_0 = (0,0,0)$, $\hat \bfv_1 = (0,1,0)$, $\hat \bfv_2 = (0,0,1)$, $\hat \bfv_3 = (1,0,0)$, $\hat \bfv_4 = (1,1,0)$ and $\hat \bfv_5 = (1,0,1)$.
For a cell $K$ of vertices $(\bfv_i)_{0 \leq i \leq 5}$, the change of coordinates is defined by the matrix $R_K$ whose columns are $\boldsymbol R_{K,1}, \boldsymbol R_{K,2}$ and $\boldsymbol R_{K,3}$ and are defined by:
\begin{align*}
    \bfc_K & = \frac{1}{2} ( \bfv_0 + \bfv_3 ), & 
    \boldsymbol R_{K,1} & = \bfv_3 - \bfc_K, &
    \\
    \boldsymbol R_{K,2}  & = \frac{1}{2} ( \bfv_1 + \bfv_4 ) - \bfc_K, & \quad 
    \boldsymbol R_{K,3} & = \frac{1}{2} ( \bfv_2 + \bfv_5 ) - \bfc_K. &
\end{align*}

\begin{figure}[hbt!]
    \centering
    \begin{tikzpicture}
    \newcommand{\TriLeftBack}{1.2,1}
    \newcommand{\TriLeftFront}{0,0}
    \newcommand{\TriLeftTop}{-0.5,2.6}
    \newcommand{\TriRightBack}{4.2,1}
    \newcommand{\TriRightFront}{2.,0}
    \newcommand{\TriRightTop}{3,3}
    \newcommand{\MidBack}{2.7,1}
    \newcommand{\MidFront}{1.,0}
    \newcommand{\MidTop}{1.25,2.8}

    \draw[line width=1.2pt] (\TriLeftTop) -- (\TriLeftFront) ;
    \draw [line width=1.2pt, dashed](\TriLeftFront) -- (\TriLeftBack) -- (\TriLeftTop) ;
    
    \draw[line width=1.2pt] (\TriRightFront) -- (\TriRightBack) -- (\TriRightTop) -- (\TriRightFront) ;
    
    \draw[line width=1.2pt] (\TriLeftFront) -- (\TriRightFront) ;
    \draw[line width=1.2pt] (\TriLeftTop) -- (\TriRightTop) ;
    \draw [dashed, line width=1.2pt](\TriLeftBack) -- (\TriRightBack) ;

    \foreach \x in {
        (\TriLeftBack), (\TriLeftFront), (\TriLeftTop), (\TriRightBack), (\TriRightFront), (\TriRightTop), (\MidBack), (\MidFront), (\MidTop)
    }{
        \fill \x circle[radius=2pt];
    }

    \draw (\TriLeftFront) node[below left] {$\bfv_0$};
    \draw (\TriLeftBack) node[below right] {$\bfv_1$};
    \draw (\TriLeftTop) node[above left] {$\bfv_2$};
    \draw (\TriRightFront) node[below right] {$\bfv_3$};
    \draw (\TriRightBack) node[below right] {$\bfv_4$};
    \draw (\TriRightTop) node[above] {$\bfv_5$};
    \draw (\MidFront) node[below left] {$\bfc_K$} ;

    \draw[dashed, line width=1.2pt, ->, orange] (\MidFront) -- (\TriRightFront)
        node[midway, below] {$\boldsymbol R_{K,1}$};
    \draw[dashed, line width=1.2pt, ->, orange] (\MidFront) -- (\MidBack)
        node[above right] {$\boldsymbol R_{K,2}$};
    \draw[dashed, line width=1.2pt, ->, orange] (\MidFront) -- (\MidTop)
        node[midway, right] {$\boldsymbol R_{K,3}$};
    \end{tikzpicture}
    \caption{A triangular prismatic cell $K$, with its vertex and the vectors used to define the linear change of coordinate}
    \label{fig:transfo_prism}
\end{figure}

\begin{rmrk}[Parametric element for quadrilateral pyramid and triangular prismatic cells]
We have introduced here a method to construct nonparametric elements, notably on triangular prisms and quadrilateral pyramids.
Still, it is possible to derive parametric counterparts for these types of cells in the following manner.
Let us consider a prismatic or pyramidal cell $K \in \mesh$.
First, build on the corresponding reference element $\widetilde{K}$ shape functions related to the polynomial space $P_{\widetilde{K}}$.
Then, build the shape functions on the real cell $K$ using the mappings given in Appendix \ref{app:mapping}.
Note that the analysis given in Section \ref{Poisson:sec:FEM} still holds, as long as $\P_1(K)$ is contained in the local approximation spaces for all $K \in \mesh$.
This condition, however, only holds if the mappings are affine, which severely restricts the set of admissible cells.
As explained in Appendix \ref{app:mapping}, only quadrilateral pyramids whose basis is a parallelogram can be built using an affine mapping.
Regarding prisms, it means that the triangular faces must lie in parallel planes; in other words, it means that only extruded triangles are valid.
If the mapping toward a cell $K \in \mesh$ is no longer required to be affine, then $\P_1(K)$ is no longer contained in the local approximation space.
Consequently, degradations akin to those observed for the parametric RT element is observed theoretically and numerically.
\end{rmrk}

\section{Numerical results}
\label{Poisson:sec:num_res}

We present in this section some computations performed with the open-source software CALIF$^3$S \cite{califs} developed at ASNR. 
We propose a convergence test for the Poisson problem \eqref{Poisson:eq:weak_Poisson}, on different sequences of meshes.
The domain is $\Omega = [0,1]^3$ and the continuous solution is given as:
\begin{align*}
    u(x,y,z) = \sin(\pi x) \sin(\pi y) \sin(\pi z).
\end{align*}
The right-hand side $f$ of the problem is defined as $f := - \Delta u$, and we enforce homogeneous Dirichlet boundary conditions.

\subsection{Test on meshes with non-curved faces}

First, we propose a test on sequences of regular and distorted meshes with non-curved faces consisting of either hexahedra, triangular prisms, or quadrilateral pyramids, used to illustrate the convergence property of the classical FE scheme \eqref{Poisson:eq:disc_Poisson}.
We build these meshes in three steps:
\begin{itemize}
    \item Firstly, we build a Cartesian two-dimensional grid of $(2^n)^2$ cells, where $n$ is an integer that varies in $\llbracket 3,7 \rrbracket$ with the mesh refinement;
    \item Then, if distorted meshes are desired, a distortion is applied to the two-dimensional grid, which consists in moving each internal vertex in a random direction, with a displacement length equal to $0.2\,h$.
    If not, nothing is done at this step.
    \item Finally, the last step depends on the type of cells:
    \begin{itemize}
        \item For hexahedral meshes, the quadrilateral mesh is extruded in three dimensions into $2^n$ planes, to obtain hexahedra;
        \item For pyramidal meshes, the quadrilateral grid is extruded in three dimensions into $2^n$ planes, to obtain hexahedra. 
        Each hexahedron is then split into six quadrilateral pyramids whose bases are the faces of the hexahedra, and their apex is the centers of gravity of the hexahedral;
        \item For prismatic meshes, the quadrilateral cells are split into two triangles following one of their diagonals. The obtained triangles are then extruded into prisms into $2^n$ planes.
    \end{itemize}
\end{itemize}

Then, we introduce a sequence of hybrid meshes with non-curved faces, which consists of hexahedra, prisms, and pyramids cells.
The construction is as follows:
\begin{itemize}
    \item First, we divide the domain $[0,1]^2$ into $[0,1/3] \!\times \![0,1] \;\cup \; [1/3, 2/3]\!\times\![0,1] \; \cup\;  [2/3, 1] \!\times\! [0,1]$.
    \item Then, each of the subdomain is meshed with quadrilateral cells, using $n'$ cells in the $x$ direction and $3n'$ in the $y$ direction, where the integer $n' \in \{3,6,12,24,48\}$ varies with the mesh refinement.
    \item Next, each of the subdomain is extruded into $3n'$ layers.
    \item Finally, cells in the extrusion of the first subdomain are left untouched, cells in the extrusion of the second subdomain are refined (in the same fashion as previously) into quadrilateral pyramids, and cells in the extrusion of the last subdomain are refined (once again, as previously) into triangular prisms.
\end{itemize}
Comparisons of extracted sections of the coarsest distorted meshes with non-curved faces and distorted meshes with curved faces (used for the next test) are shown in Figure \ref{fig:meshes}.

\begin{figure}[htb!]
    \centering
    \includegraphics[width=0.24\linewidth]{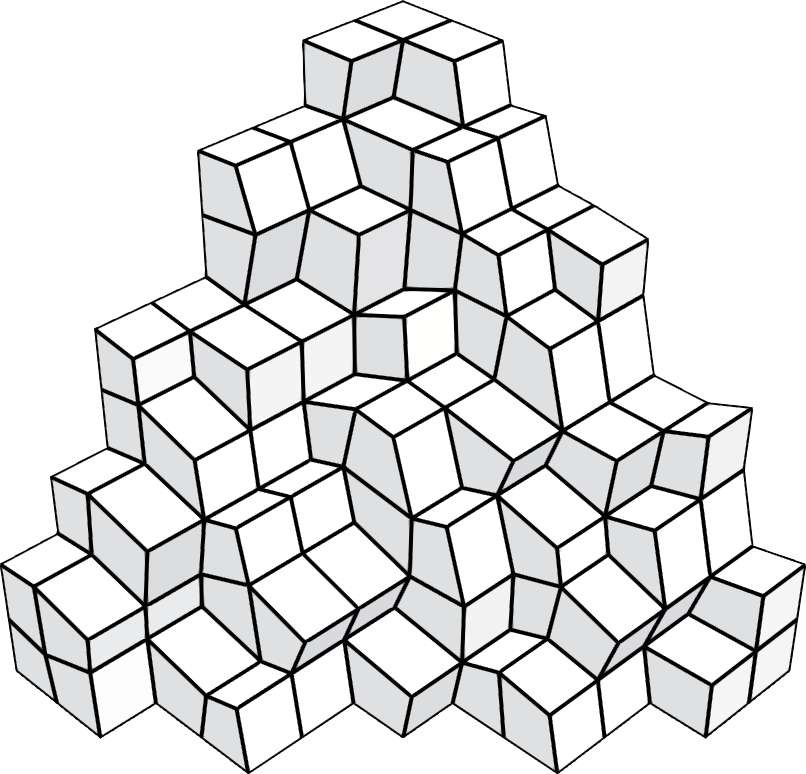}
    \includegraphics[width=0.24\linewidth]{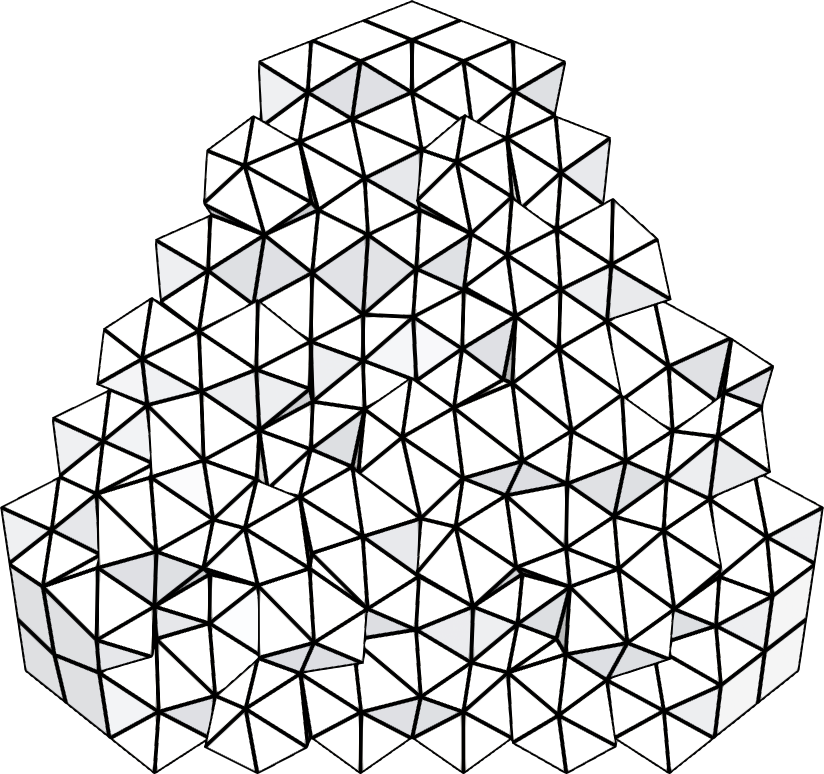}
    \includegraphics[width=0.24\linewidth]{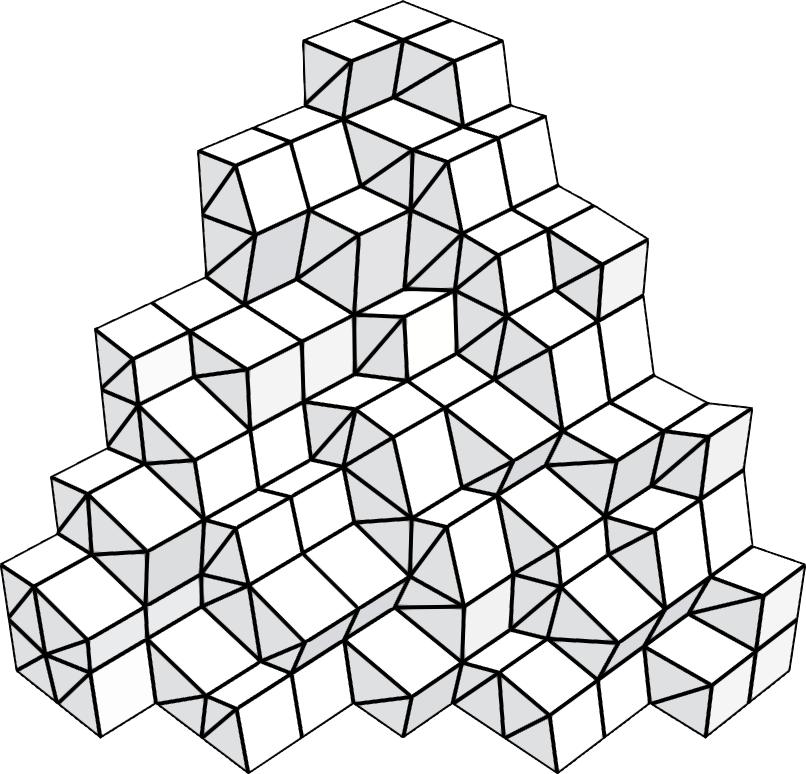}
    \includegraphics[width=0.24\linewidth]{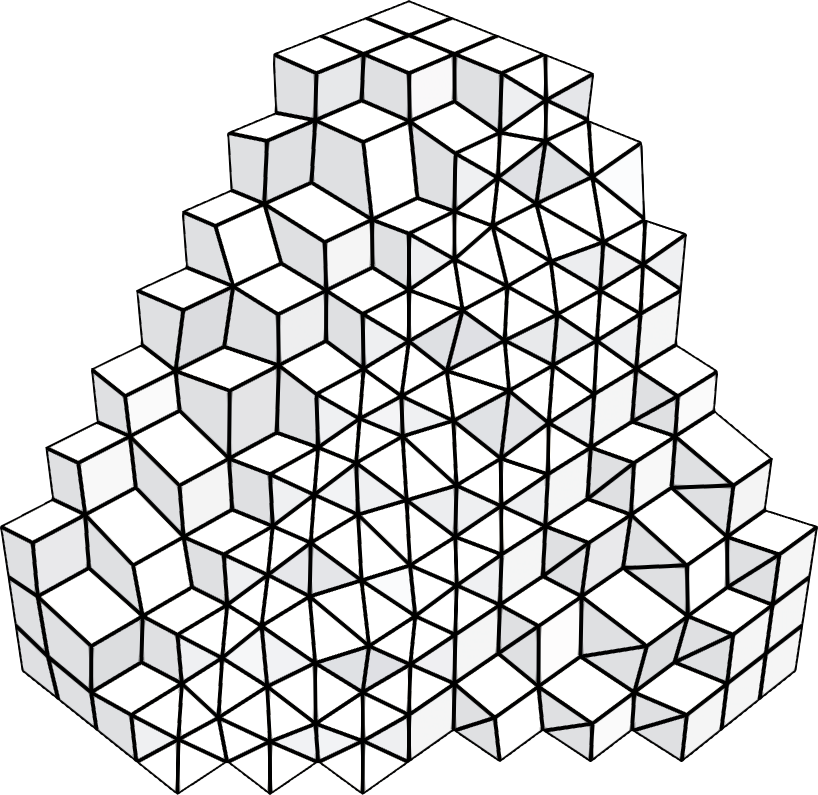} \\
    \includegraphics[width=0.24\linewidth]{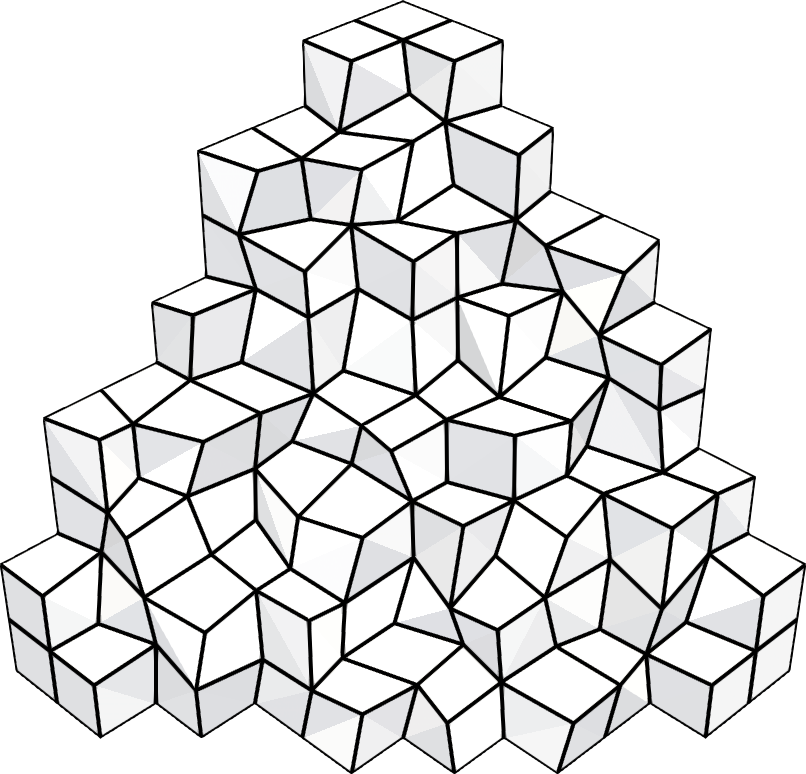}
    \includegraphics[width=0.24\linewidth]{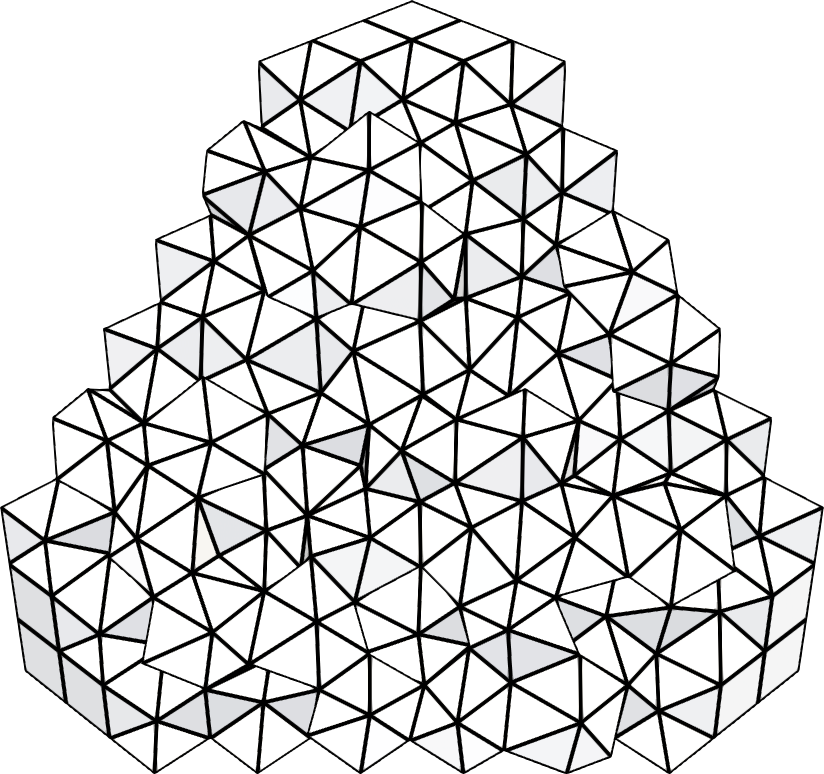}
    \includegraphics[width=0.24\linewidth]{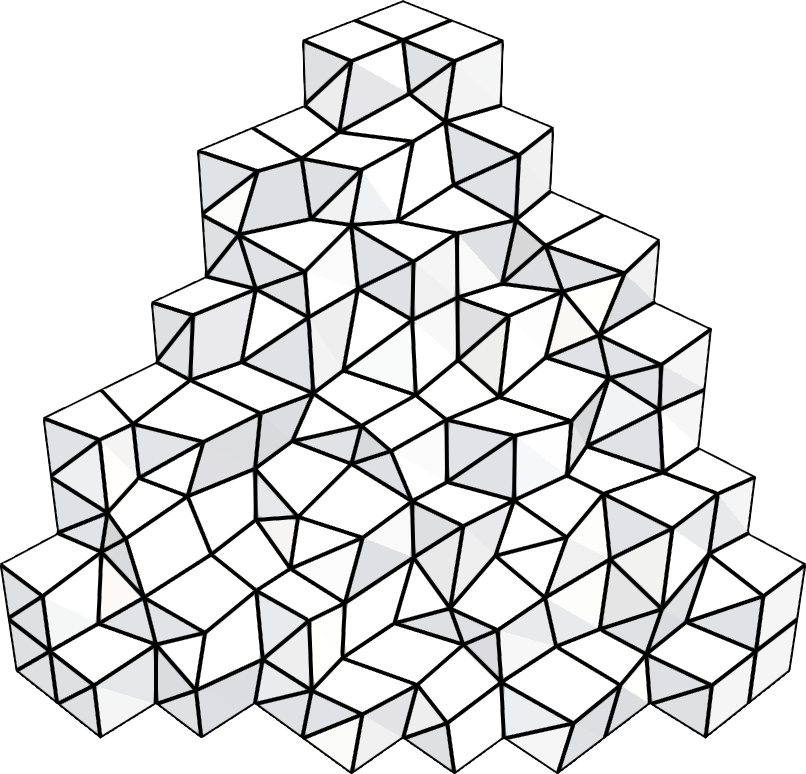}
    \includegraphics[width=0.24\linewidth]{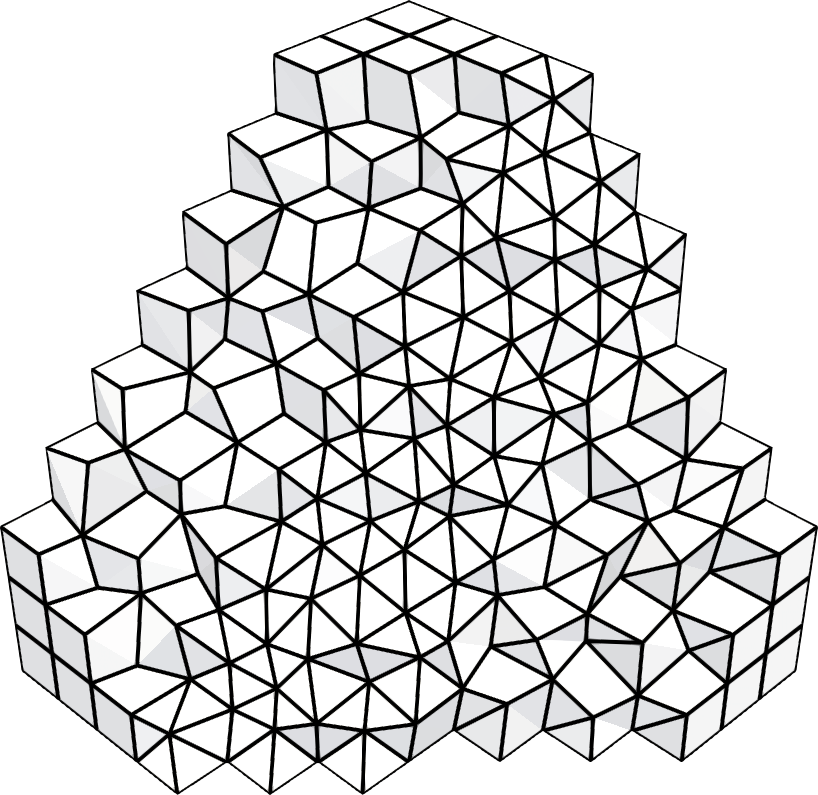} 
    \caption{
    View from above of sections of the coarsest meshes.
    From left to right: hexahedral, pyramidal, prismatic and hybrid meshes. First row: distorted meshes with non-curved faces. Second row: distorted meshes with curved faces.
    }
    \label{fig:meshes}
\end{figure}

The convergence rate in the $H^1(\Omega)$ semi-norm and in the $L^2(\Omega)$ norm are showed in Figures \ref{Poisson:fig:FEM_Poisson_Hexa}, \ref{Poisson:fig:FEM_Poisson_Pri}, \ref{Poisson:fig:FEM_Poisson_Pyr} and \ref{Poisson:fig:FEM_Poisson_Comp} for hexahedral, prismatic, pyramidal and hybrid meshes, respectively.
In all these tests, the optimal orders found from the theory are recovered, both in the $H^1(\Omega)$ semi-norm and in the $L^2(\Omega)$ norm.

\begin{figure}[ht!]
    \centering
    \includegraphics[width = 0.48\textwidth]{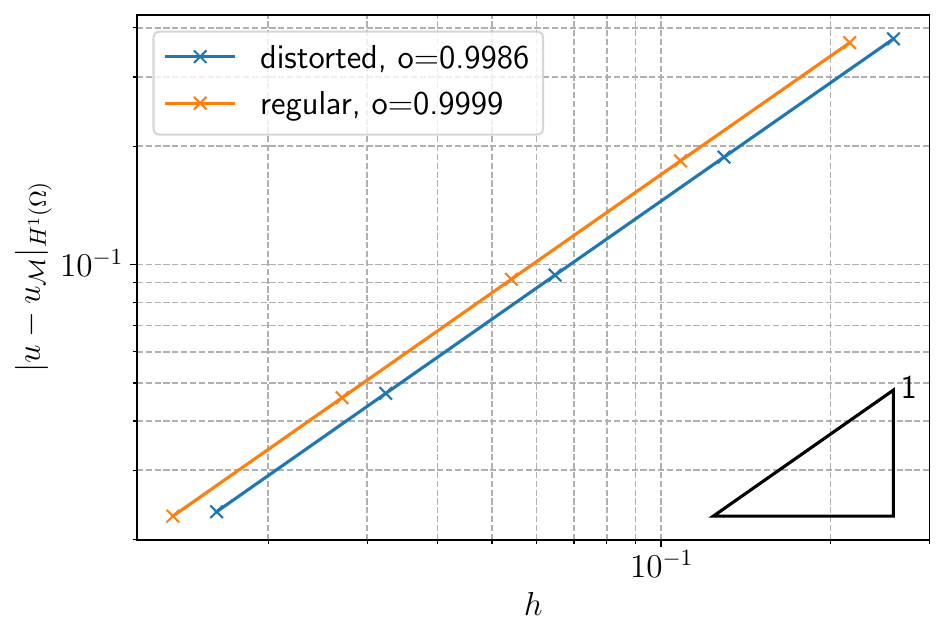}
    \includegraphics[width = 0.48\textwidth]{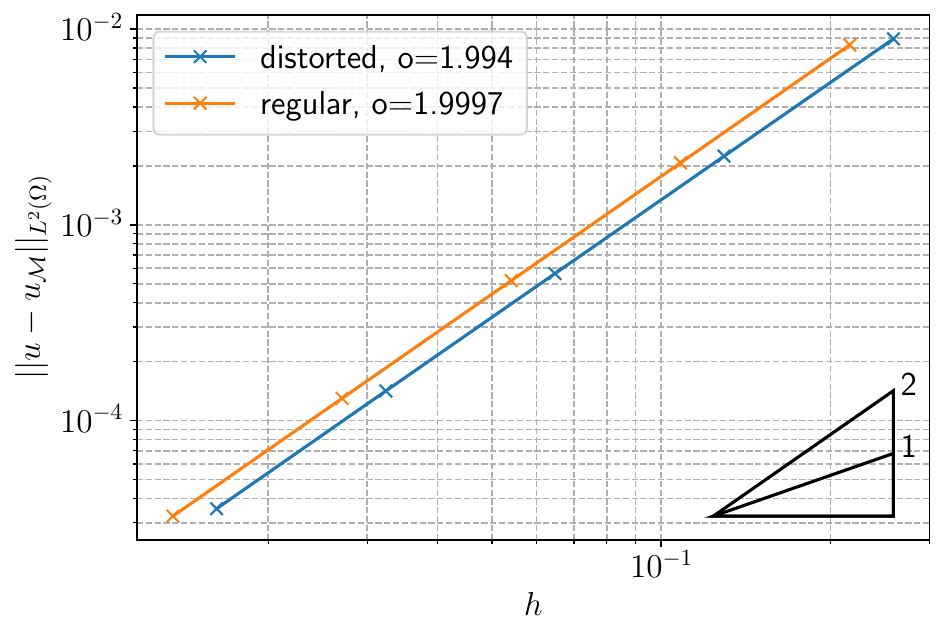}%
    \caption{Convergence rate on a sequence of hexahedral meshes with non-curved faces in the $H^1(\Omega)$ semi-norm and in the $L^2(\Omega)$ norm }
    \label{Poisson:fig:FEM_Poisson_Hexa}
\end{figure}

\begin{figure}[ht!]
    \centering
    \includegraphics[width = 0.48\textwidth]{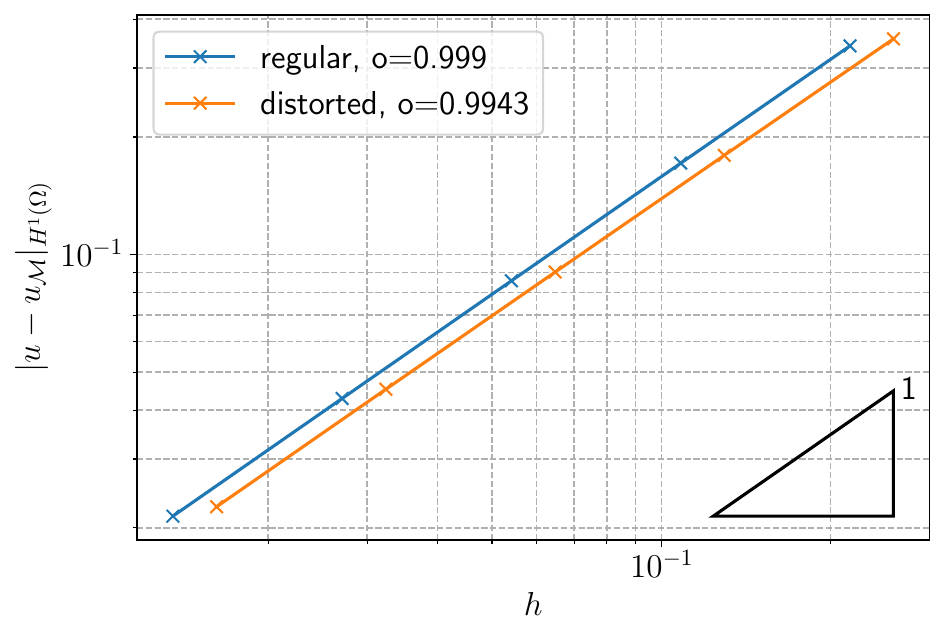}
    \includegraphics[width = 0.48\textwidth]{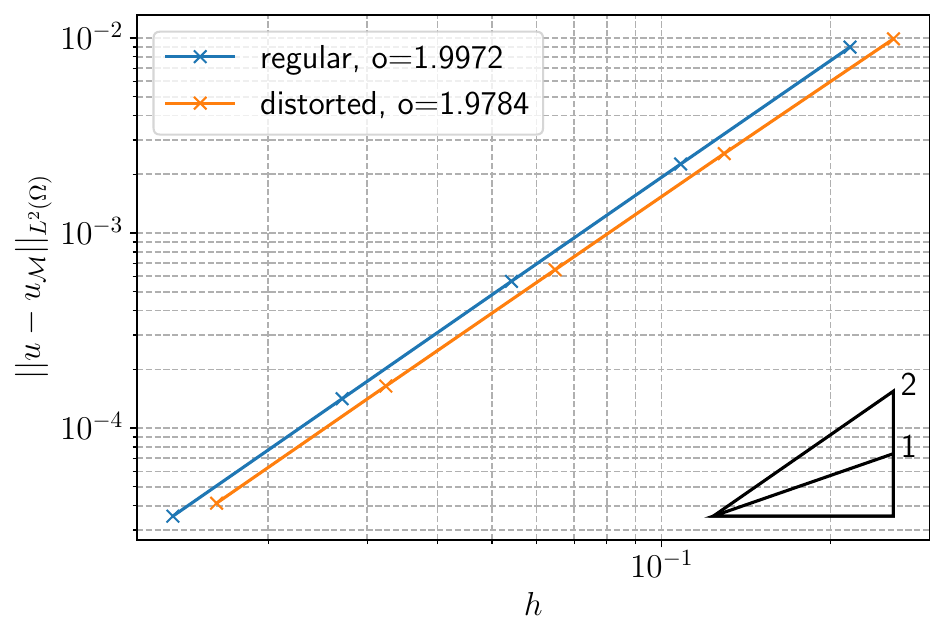}%
    \caption{Convergence rate on a sequence of prismatic meshes with non-curved faces in the $H^1(\Omega)$ semi-norm and in the $L^2(\Omega)$ norm }
    \label{Poisson:fig:FEM_Poisson_Pri}
\end{figure}

\begin{figure}[ht!]
    \centering
    \includegraphics[width = 0.48\textwidth]{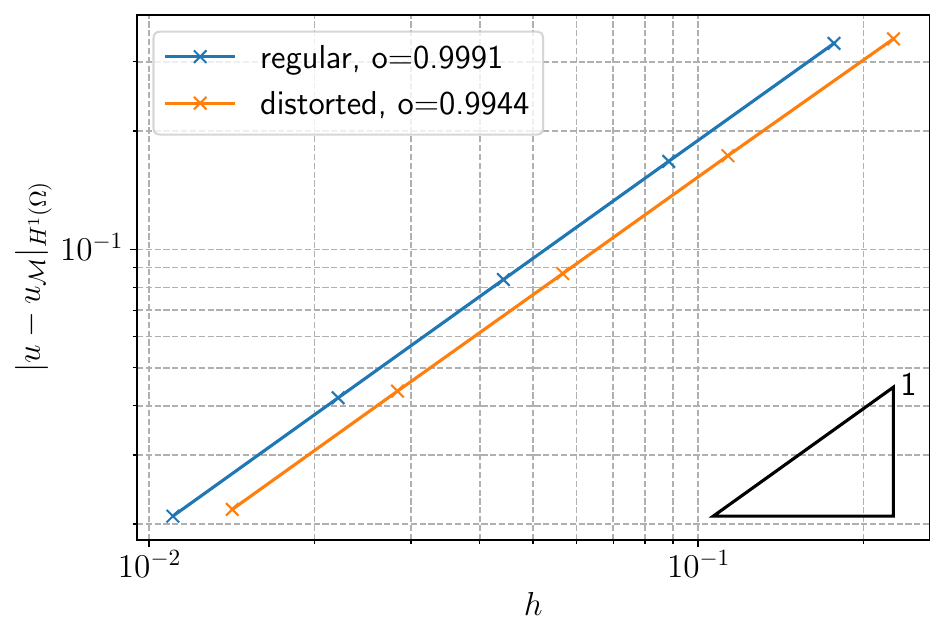}
    \includegraphics[width = 0.48\textwidth]{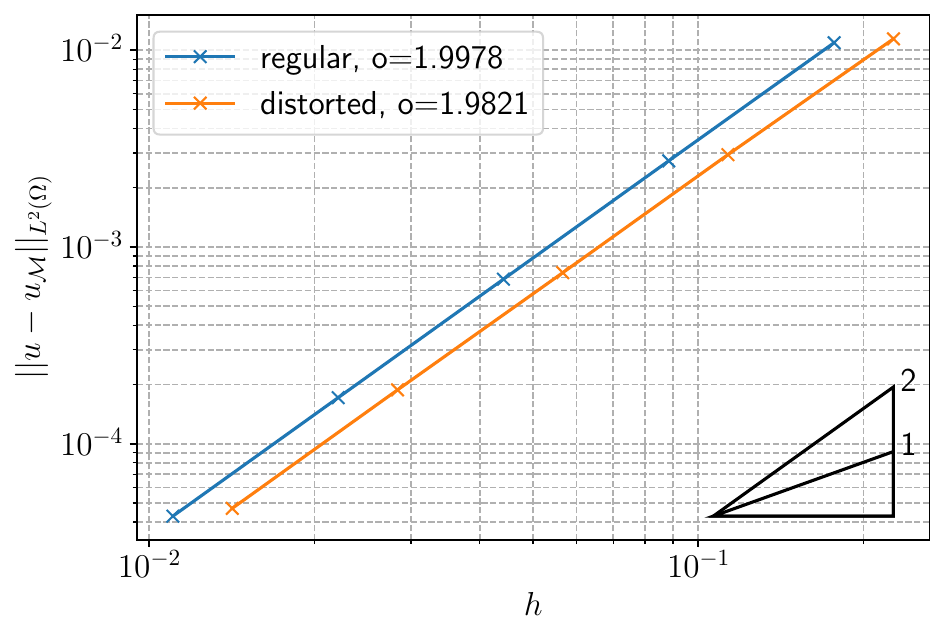}%
    \caption{Convergence rate on a sequence of pyramidal meshes with non-curved faces in the $H^1(\Omega)$ semi-norm and in the $L^2(\Omega)$ norm }
    \label{Poisson:fig:FEM_Poisson_Pyr}
\end{figure}

\begin{figure}[ht!]
    \centering
    \includegraphics[width = 0.48\textwidth]{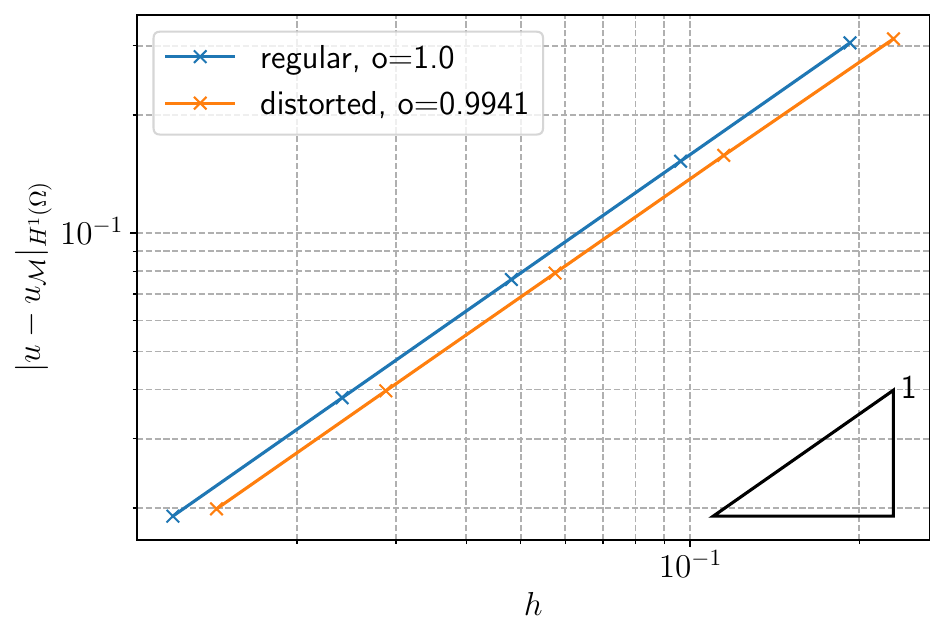}
    \includegraphics[width = 0.48\textwidth]{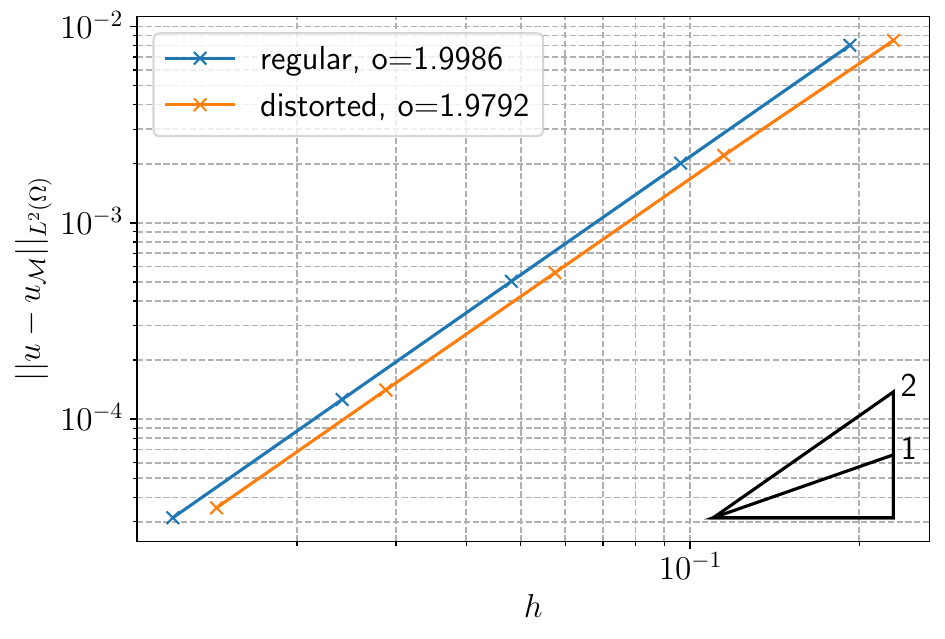}%
    \caption{Convergence rate on a sequence of hybrid meshes with non-curved faces in the $H^1(\Omega)$ semi-norm and in the $L^2(\Omega)$ norm}
    \label{Poisson:fig:FEM_Poisson_Comp}
\end{figure}

\subsection{Test on meshes with curved faces}

We now turn a test on a sequence of meshes with curved faces.
The meshes are constructed in the same way that the sequences of non-distorted meshes from the previous test, except that a distortion (again due to a random displacement of the internal vertex with a length equal to $0.2\,h$) is applied as a final step.
Because the displacement is applied in three spatial dimensions, the vertex of the same face may not belong to the same plane anymore, leading to meshes with curved faces.
Convergence results shown in Figures \ref{Poisson:fig:DG_Poisson_Hexa}, \ref{Poisson:fig:DG_Poisson_Pri}, and \ref{Poisson:fig:DG_Poisson_Pyr} illustrates the lack of consistency of the FE scheme, since the discrete solutions of \eqref{Poisson:eq:disc_Poisson} does not converge.
However, the optimal order of convergence is recovered using the DG-like formulation, where we have set $\gamma = 5$.
We show the results on a sequence of hybrid meshes in Figure \ref{Poisson:fig:DG_Poisson_Composed}.
For this sequence of meshes, the convergence was not reached using $\gamma=5$, so the coefficient was set to $\gamma=10$.
Even with such a choice, the convergence rate in the $H^1(\Omega)$ semi-norm is a little lower than 1 and saturates a little on refined meshes, which suggests that a greater value of $\gamma$ should be used for hybrid meshes.
One may indeed show that a lower bound on the possible values of $\gamma$ to recover stability depends on the mesh regularity parameters, that does not vary in the same way for this sequence of hybrid mesh as for the case of the sequences of non-composite mesh.

\begin{figure}[ht!]
    \centering
    \includegraphics[width = 0.48\textwidth]{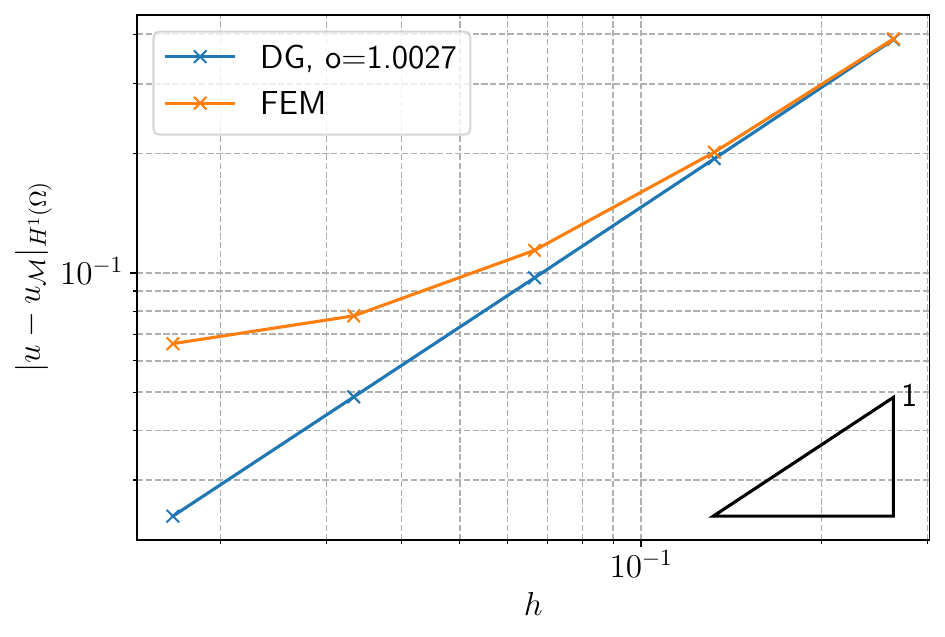}%
    \includegraphics[width = 0.48\textwidth]{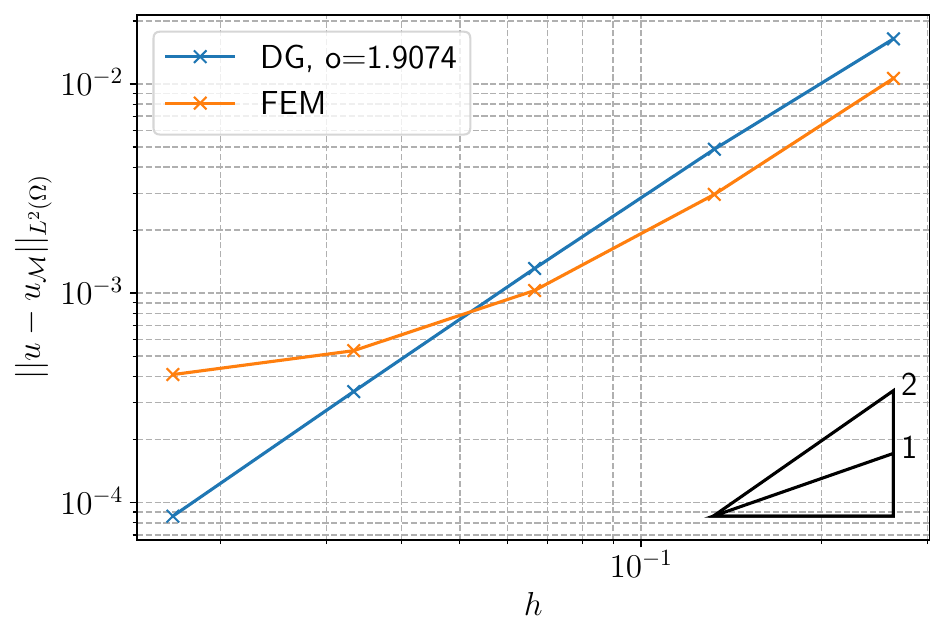}%
    \caption{Convergence rate on a sequence of hexahedral meshes with curved faces in the $H^1(\Omega)$ semi-norm and in the $L^2(\Omega)$ norm }
    \label{Poisson:fig:DG_Poisson_Hexa}
\end{figure}

\begin{figure}[ht!]
    \centering
    \includegraphics[width = 0.48\textwidth]{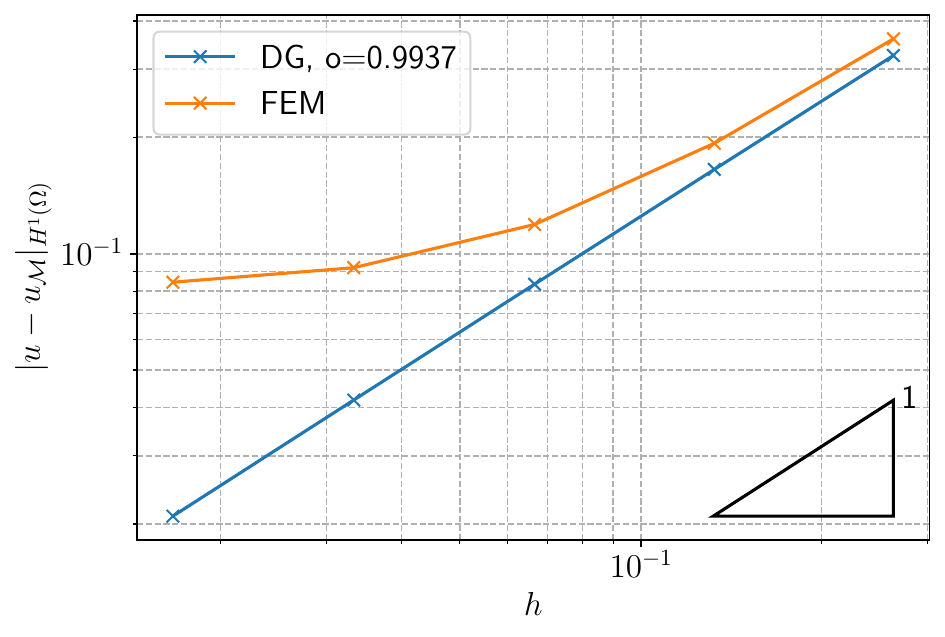}
    \includegraphics[width = 0.48\textwidth]{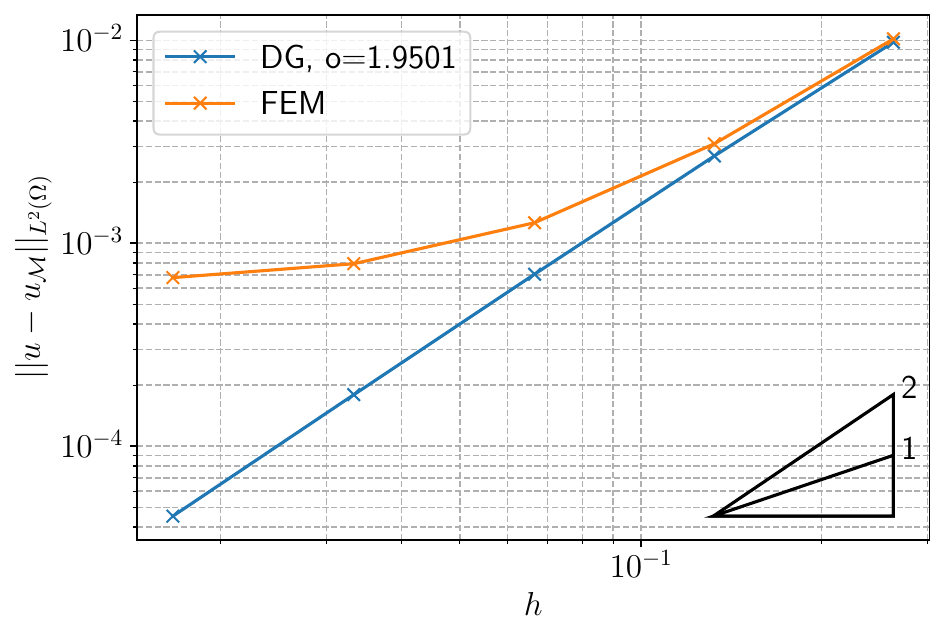}%
    \caption{Convergence rate on a sequence of prismatic meshes with curved faces in the $H^1(\Omega)$ semi-norm and in the $L^2(\Omega)$ norm }
    \label{Poisson:fig:DG_Poisson_Pri}
\end{figure}

\begin{figure}[ht!]
    \centering
    \includegraphics[width = 0.48\textwidth]{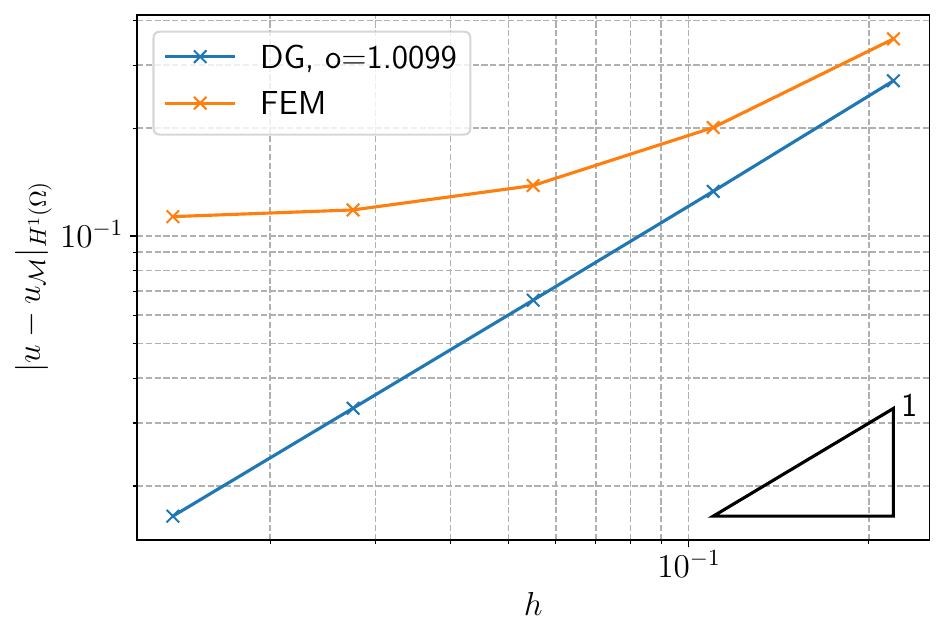}
    \includegraphics[width = 0.48\textwidth]{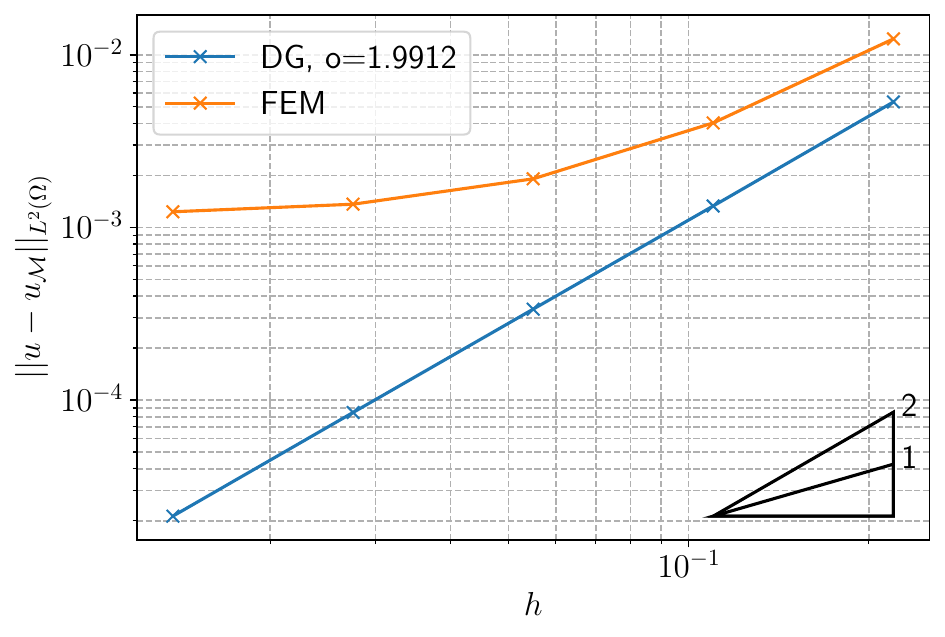}%
    \caption{Convergence rate on a sequence of pyramidal meshes with curved faces in the $H^1(\Omega)$ semi-norm and in the $L^2(\Omega)$ norm }
    \label{Poisson:fig:DG_Poisson_Pyr}
\end{figure}

\begin{figure}[ht!]
    \centering
    \includegraphics[width = 0.48\textwidth]{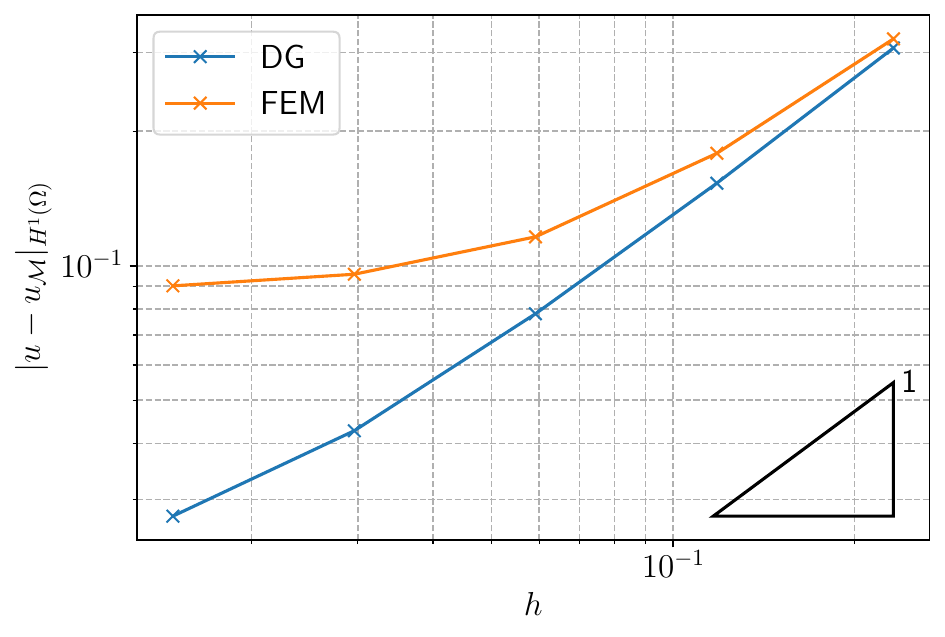}
    \includegraphics[width = 0.48\textwidth]{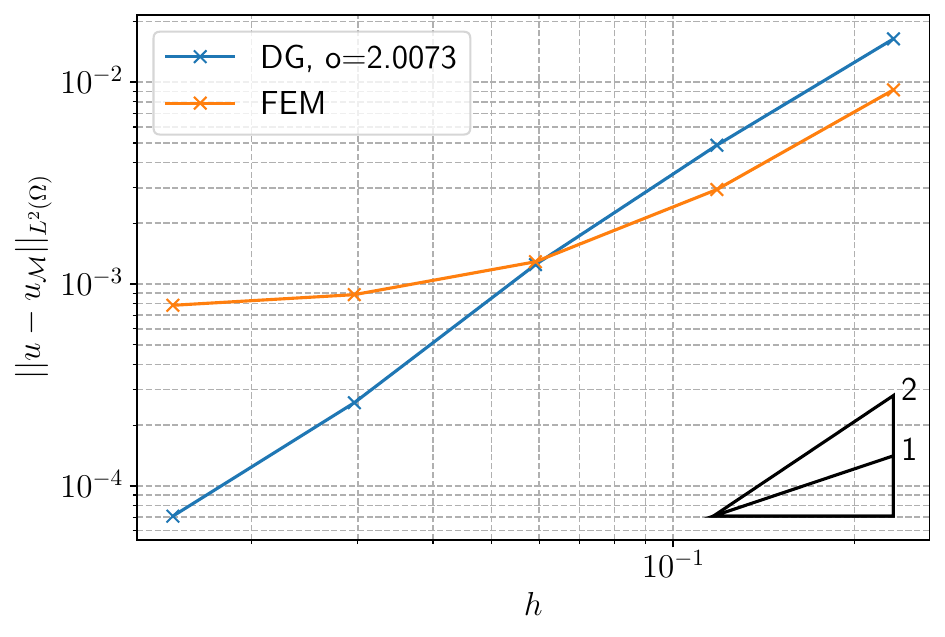}%
    \caption{Convergence rate on a sequence of hybrid meshes with curved faces in the $H^1(\Omega)$ semi-norm and in the $L^2(\Omega)$ norm}
    \label{Poisson:fig:DG_Poisson_Composed}
\end{figure}

\section{Conclusion}

In this work, we have proposed a nonparametric finite element framework designed to handle general meshes.
The construction extends classical Crouzeix-Raviart and Rannacher-Turek elements to arbitrary polytopal cells with a set of face-based degrees of freedom defined solely by mean values on the faces.

We employ a nonparametric approach, which involves constructing local approximation spaces directly on the real cells to incorporate low-order polynomials. 
It allows recovering optimal interpolation error estimates under minimal geometric assumptions. 

Using these results, we have introduced a FE scheme for the Poisson problem on arbitrary polytopal meshes. 
For meshes with planar faces, the method is shown to achieve optimal convergence rates. 
In the presence of curved faces, the scheme becomes non-convergent due to its lack of consistency.
To restore convergence, we proposed introducing DG-like consistency and stabilization terms. 
Numerical experiments confirm the accuracy of both schemes on a variety of three-dimensional meshes.

Then, we gave a method to build the local approximation space in a nonparametric manner, in accordance with the theory.
Explicit examples are given for notably triangular prismatic and quadrilateral pyramidal cells.

The next step of this work is the development of staggered Stokes schemes on general polyhedral meshes, with a view toward applications to time-dependent problems. 
Such schemes can be constructed by defining a velocity approximation space obtained by tensorization of the scalar spaces introduced here, together with a constant-per-cell pressure approximation space.
To achieve this, several key points should be addressed:
\begin{itemize}
    \item The first issue concerns establishing a discrete inf-sup condition for the resulting velocity-pressure pair under minimal mesh regularity assumptions. 
    It can be achieved by showing that the canonical interpolation operator acts as a Fortin operator.
    The proof can be derived in a straightforward manner thanks to the inclusion $\P_1(K) \subset P_K$.
    
    \item  The second issue concerns the error estimates of the resulting finite element scheme. 
    The interpolation error and the velocity consistency error can be estimated using arguments analogous to those developed in the present work. 
    In addition, suitable bounds must be derived for the pressure-velocity coupling bilinear form. 
    This analysis can be carried out in a straightforward manner when the polytopal cells have non-curved faces.
    
    \item However, the situation is not as clear in the presence of curved faces. 
    Indeed, the pressure-velocity coupling term can be rewritten, in a finite-volume fashion, as a discrete velocity divergence operator or as a discrete pressure gradient operator. 
    It is desirable to define the discrete velocity divergence operator so that it yields locally conservative fluxes across mesh faces and preserves an $L^2$-duality with the discrete pressure gradient operator. 
    These properties are essential, for instance, to derive kinetic energy estimates for time-dependent problems. 
    A naive recovery of consistency through DG-like stabilization terms may break one or both of these properties, or even violate the inf-sup condition. 
    Special care is therefore required to design the velocity-pressure coupling bilinear form.

    \item Finally, for certain applications, the CR and the RT elements are known to suffer from spurious pressure oscillations, sometimes referred to as checkerboard modes. 
    These artifacts arise from an incompatibility between the velocity and pressure approximation spaces, typically due to an imbalance in their respective numbers of degrees of freedom. 
    Similar issues may occur when using prismatic or pyramidal cells; strategies to mitigate such effects should thus maybe be derived. 
\end{itemize}

\begin{acknowledgement}
\textbf{Acknowledgement} -- 
This work was conducted during my (the first author) final year of PhD under the guidance of Jean-Claude, who passed away in October 2025. I am deeply grateful for his mentorship and his support throughout these years. Proofreading and shaping this article without his expertise proved challenging, but I hope this work meets the standards of excellence that Jean-Claude set throughout his career.    
\end{acknowledgement}

\appendix

\section{Building three-dimensional cells}
\label{app:mapping}

In this appendix, following the framework of \cite{ern-21-fin}, we recall the construction of three-dimensional meshes.
In practice, the assembly of finite element operators or the computation of a cell volume rely on the evaluation of integrals over physical cells, which requires (i) quadrature rules on a reference configuration and (ii) a mapping from this reference configuration to the cell under consideration.
The construction of quadrature formulae is not addressed here; for classical triangular prismatic and quadrilateral pyramidal cells, suitable rules can be found in \cite{kub-13-new,kub-13-new2,fel-04-com,jas-21-hig}.
Rather, we focus on the definition of the mappings, defined solely by the vertices of the cell under consideration. 
Indeed, mesh-generation tools usually only provide the position of the vertices of the cells.
While these unambiguously define a cell when all the vertices of a face of that cell are co-planar, it is not the case when non-co-planar vertices are involved.

For tetrahedra and hexahedra, standard choices include the $\P_1$ mapping and the $\Q_1$ mapping \cite{cia-91-han}, respectively; mappings for prisms and pyramids are, on the other hand, not as widely common.
Beyond being defined entirely by the vertices of the real cells, these mappings should also satisfy two different properties.
First, on so-called regular cells, the mapping should be affine, as its Jacobian is constant in such a case, which, in turn, allows the efficient use of low-order quadrature methods.
Second, the mappings must be conforming: for two adjacent cells, it means that their common face must be uniquely and consistently defined, independently of the cells' natures.
This latest requirement is formalized by the following criterions.
\begin{crtrn}[Conformity with respect to triangular faces] \label{hyp:cons_tri}
    Let $\edge \in \edges$ be a triangular face with vertices $\bfv_0$, $\bfv_1$, and $\bfv_2$. 
    Regardless of it(s) neighboring cell(s), and up to a permutation of its vertices, $\edge$ can be defined as the affine triangle:
    \begin{align*}
        \edge = \{ \bfv_0 + (\bfv_1 - \bfv_0) \widehat x + (\bfv_2 - \bfv_0) \widehat y, \; 0 \leq \widehat x,\widehat y \leq 1 \}.
    \end{align*}
\end{crtrn}
\begin{crtrn}[Conformity with respect to quadrangular faces] \label{hyp:cons_quad}
    Let $\edge \in \edges$ be a quadrilateral face with vertices $\bfv_0$, $\bfv_1$, $\bfv_2$, and $\bfv_3$. 
    Regardless of it(s) neighboring cell(s), and up to a permutation of its vertices, $\edge$ can be defined as the bilinear surface (here-called "generalized quadrilateral"):
    \begin{align*}
        \edge = \{ \bfv_0 + (\bfv_1 - \bfv_0) \widehat x + (\bfv_3 - \bfv_0) \widehat y + ( \bfv_2 - \bfv_1 + \bfv_0 - \bfv_3) \widehat x \widehat y, \; 0 \leq \widehat x, \widehat y \leq 1 \}.
    \end{align*}
\end{crtrn}

Let us now detail the construction of such mappings.
Let $\mesh$ be a three-dimensional mesh and let $K \in \mesh$.
Denote by $\widehat K^{geo}$ a geometric reference cell of the same polyhedral nature as $K$.
Our goal is to define $T_K: \widehat K^{geo} \to K$ entirely determined by the vertices of $K$.
To do so, we introduce a geometric element $(\widehat K, P_{\widehat K}^{geo}, \Sigma_{\widehat K}^{geo})$, which should not be confused with the finite element used for the local approximation spaces. 
Here,
\begin{itemize}
    \item $\widehat K^{geo}$ is a simple reference cell;
    \item $P_{\widehat K}^{geo}$ is a space of functions as simple as possible (\ie, when possible, polynomials of low order);
    \item $\Sigma_{\widehat K}^{geo}$ is the set of degrees of freedom associated with the vertices of the reference cell.
\end{itemize}
Assume now that such an element is available, that is to say we have built a set of functions $(\widehat \Psi_i)_{0 \leq i \leq n_V -1}$, with $n_V$ the number of vertices of the considered polyhedra, such that we have $\widehat \Psi_i(\widehat v_j) = \delta_{i,j}$ for all vertices $(\widehat v_j)_{0 \leq j \leq n_V -1}$ of the reference cell $K$.
Then, for a cell $K$ and its vertices $({\bfv_i})_{0 \leq i \leq n_V-1}$, the mapping may be defined as:
\begin{align} \label{Poisson:eq:transfo}
    T_K(\widehat \bfx) = \sum_{i=0}^{n_V-1} \widehat \Psi_i(\widehat \bfx) \bfv_i,
\end{align}
where the order of the vertices are ordered similarly for the real-world cell $K$ and for the reference cell $K$ (that is, we intend to have $\bfv_i =T_K(\widehat \bfv_i)$ for all $0 \leq i \leq n_V-1$).

In the following, we specify, for each type of the considered three-dimensional cells, the space $P_{\widehat K}^{geo}$, the vertices $( \widehat \bfv_i)_{0 \leq i \leq n_V-1}$ of the reference cell, and the associated shape functions $(\widehat \Psi_i)_{0 \leq i \leq n_V-1}$.
We also show that the resulting mappings satisfy the above criterions.

\subsection{Mapping for tetrahedal cells}

For tetrahedra, we adopt the classical $\P_1$ construction and set $P_{\widehat K}^{geo} = \text{span}\left\{1, \widehat x, \widehat y, \widehat z \right\}$.
The vertices and the shape functions are listed in Table \ref{tab:transfo_tetra}.
Using these functions in the definition of the mapping \eqref{Poisson:eq:transfo}, then substituting in order $\widehat x = 0$, $\widehat y = 0$,  $\widehat z = 0$ and $\widehat z = 1 - \widehat x - \widehat y$ shows that the mapping satisfies Criterion \ref{hyp:cons_tri}.
By definition, this mapping is always affine.

\begin{table}[ht!]
    \centering
    \begin{tabular}{c|l}
        $\widehat{\bfv_0}=(0,0,0)$ & $\Psi_{0}(\widehat x, \widehat y, \widehat z) = 1 - \widehat x - \widehat y - \widehat z $ \\
        $\widehat{\bfv_1}=(1,0,0)$ & $\Psi_{1}(\widehat x, \widehat y, \widehat z) = \widehat x $ \\
        $\widehat{\bfv_2}=(0,1,0)$ & $\Psi_{2}(\widehat x, \widehat y, \widehat z) = \widehat y$ \\
        $\widehat{\bfv_3}=(0,0,1)$ & $\Psi_{3}(\widehat x, \widehat y, \widehat z) = \widehat z$ \\
    \end{tabular}
    \caption{Vertices and shape functions for the reference tetrahedra}
    \label{tab:transfo_tetra}
\end{table}

\subsection{Mapping for hexahedral cells}

For hexahedra, we employ $\Q_1$ polynomials, so that $P_{\widehat K}^{geo} = \text{span}\left\{1, \widehat x, \widehat y, \widehat z, \widehat x \widehat y, \widehat x \widehat z, \widehat y \widehat z \right\}$.
The vertices and the shape functions are given in Table \ref{tab:transfo_hexa}.
A direct computation of the mapping \eqref{Poisson:eq:transfo} with these functions, substituting in order $\widehat x = 0$, $\widehat x = 1$, $\widehat y = 0$, $\widehat y = 1$, $\widehat z = 0$ and $\widehat z = 1$, shows that the mapping satisfies Criterion \ref{hyp:cons_quad}.
The mapping reduces to an affine transformation whenever the real cell is a parallelepiped.

\begin{table}[ht!]
    \centering
    \begin{tabular}{c|l}
        $\widehat{\bfv_0}=(0,0,0)$ & $\Psi_{0}(\widehat x, \widehat y, \widehat z) = (1 - \widehat x)(1 - \widehat y)(1 - \widehat z)$ \\
        $\widehat{\bfv_1}=(0,1,0)$ & $\Psi_{1}(\widehat x, \widehat y, \widehat z) = (1 - \widehat x)\widehat y (1 - \widehat z)$ \\
        $\widehat{\bfv_2}=(0,0,1)$ & $\Psi_{2}(\widehat x, \widehat y, \widehat z) = (1 - \widehat x)(1 - \widehat y)\widehat z$ \\
        $\widehat{\bfv_3}=(1,0,0)$ & $\Psi_{3}(\widehat x, \widehat y, \widehat z) = \widehat x(1 - \widehat y) (1 - \widehat z)$ \\
        $\widehat{\bfv_4}=(1,1,0)$ & $\Psi_{4}(\widehat x, \widehat y, \widehat z) = \widehat x \widehat y (1 - \widehat z)$ \\
        $\widehat{\bfv_5}=(1,0,1)$ & $\Psi_{5}(\widehat x, \widehat y, \widehat z) = \widehat x (1- \widehat y) \widehat z$ \\
        $\widehat{\bfv_6}=(0,1,1)$ & $\Psi_{6}(\widehat x, \widehat y, \widehat z) = (1 - \widehat x)\widehat y \widehat z$ \\
        $\widehat{\bfv_7}=(1,1,1)$ & $\Psi_{7}(\widehat x, \widehat y, \widehat z) = \widehat x \widehat y \widehat z$ \\
    \end{tabular}
    \caption{Vertices and shape functions for the reference cube}
    \label{tab:transfo_hexa}
\end{table}

\subsection{Mapping for quadrilateral pyramid cells}

For quadrilateral pyramidal cells, we adopt the space of Bedrosian functions (see \cite{bed-92-sha}), although alternative constructions exist (see for instance \cite{kna-01-inv}).
This space is not polynomial, and is defined as $P_{\widehat K}^{geo} = \text{span}\left\{1, \widehat x, \widehat y, \widehat z, \frac{\widehat x \widehat y}{1 - \widehat z}\right\}$.
The vertices and the shape functions are reported in Table \ref{tab:transfo_pyr}.
As explained for instance in \cite{ber-10-hig}, the shape functions may be derived from the $\Q_1$ shape functions for the reference cube through a non-bijective mapping from the reference cube to the reference pyramid defined as $T_{pyr}(\widehat x, \widehat y, \widehat z) = ((1 - \widehat z) \widehat y, (1 - \widehat z) \widehat y, \widehat z)^t$. 
Using these basis functions in the mapping \eqref{Poisson:eq:transfo}, then substituting in order $\widehat x = 0$, $\widehat y = 0$, $ \widehat z = 1 - \widehat x$, $\widehat z = 1 - \widehat y$, one verifies that the mapping satisfies Criterion \ref{hyp:cons_tri}.
Substituting $\widehat z = 0$ shows that it satisfies as well Criterion \ref{hyp:cons_quad}.
Elementary computation (see \cite{ber-10-hig}) shows that the mapping is affine whenever the quadrilateral face of the pyramid is a parallelogram.

\begin{table}[ht!]
    \centering
    \begin{tabular}{c|l}
        $\widehat{\bfv_0}=(0,0,0)$ & $\Psi_0( \widehat x, \widehat y, \widehat z) = \left(1 - \dfrac{\widehat x}{1 - \widehat z}\right)\left(1 - \dfrac{\widehat y}{1 - \widehat z}\right)(1- \widehat z)$ \\
        $\widehat{\bfv_1}=(1,0,0)$ &  $\Psi_1( \widehat x, \widehat y, \widehat z) = \dfrac{\widehat x}{1 - \widehat z}\left(1 - \dfrac{\widehat y}{1 - \widehat z}\right)(1- \widehat z)$ \\
        $\widehat{\bfv_2}=(1,1,0)$ & $\Psi_2( \widehat x, \widehat y, \widehat z) = \dfrac{\widehat x}{1 - \widehat z} \dfrac{\widehat y}{1 - \widehat z} (1- \widehat z)$ \\
        $\widehat{\bfv_3}=(0,1,0)$ & $\Psi_3( \widehat x, \widehat y, \widehat z) = \left(1 - \dfrac{\widehat x}{1 - \widehat z}\right)\dfrac{\widehat y}{1 - \widehat z}(1-\widehat z)$ \\
        $\widehat{\bfv_4}=(0,0,1)$ & $\Psi_4( \widehat x, \widehat y, \widehat z) = \widehat z$ \\
    \end{tabular}
    \caption{Vertices and shape functions for the reference pyramid}
    \label{tab:transfo_pyr}
\end{table}

\subsection{Mapping for prismatic cells}

For prismatic cells, we define $P_{\widehat K}^{geo} = \text{span}\left\{1 , \widehat x, \widehat y, \widehat z, \widehat x \widehat y, \widehat x \widehat z \right\}$.
The vertices and the polynomials $(\Psi_i)_{1 \leq i \leq 6}$ are given in Table \ref{tab:transfo_pri}.
The polynomials may be directly computed once again thanks to the $\Q_1$ mapping for the reference cube through a non-bijective mapping from the reference cube to the reference prism defined as $T_{pri}(\widehat x, \widehat y, \widehat z) = (\widehat x, (1 - \widehat z) \widehat y, \widehat z)^t$.
Taking these functions in the definition of the mapping \eqref{Poisson:eq:transfo}, then substituting in order $\widehat x = 0$, and $\widehat x = 1$ shows that the mapping satisfies Criterion \ref{hyp:cons_tri}.
Substituting $\widehat y =0$, $\widehat z = 0$ and then $\widehat y = 1 - \widehat z$ shows that it satisfies as well Criterion \ref{hyp:cons_quad}.
This mapping is affine whenever the two triangular faces lies in parallel planes.

\begin{table}[ht!]
    \centering
    \begin{tabular}{c|l}
        $\widehat{\bfv_0}=(0,0,0)$ & $\Psi_0(\widehat x, \widehat y, \widehat z) = (1 - \widehat x)(1 - \widehat y - \widehat z)$ \\
        $\widehat{\bfv_1}=(0,1,0)$ & $\Psi_1(\widehat x, \widehat y, \widehat z) = (1 - \widehat x)\widehat y$ \\
        $\widehat{\bfv_2}=(0,0,1)$ & $\Psi_2(\widehat x, \widehat y, \widehat z) = (1 - \widehat x)\widehat z$ \\
        $\widehat{\bfv_3}=(1,0,0)$ & $\Psi_3(\widehat x, \widehat y, \widehat z) = \widehat x(1 - \widehat y - \widehat z)$ \\
        $\widehat{\bfv_4}=(1,1,0)$ & $\Psi_4(\widehat x, \widehat y, \widehat z) = \widehat x \widehat y$ \\
        $\widehat{\bfv_5}=(1,0,1)$ & $\Psi_5(\widehat x, \widehat y, \widehat z) = \widehat x \widehat z$ \\
    \end{tabular}
    \caption{Vertices and shape functions for the reference prism}
    \label{tab:transfo_pri}
\end{table}

\section{A trace lemma on possibly curved faces}
\label{app:trace}
In this Appendix, we adapt the proof of the trace lemma given in \cite{ey-09-con} to curved faces. 
Contrary to \cite{can-21-hp}, we do not suppose here that the cells are star shaped.
However, we introduce the slightly less restrictive notion of truncated cones.
The result is as follows:

\begin{lmm}[Trace lemma on curved faces] \label{lem:trace_curved}
Let $K \in \mesh$.
Let $\edge \in \edges(K)$ be a face such that there exists $T \in \mathcal{C}^1([0,1]^2,\R^3)$ such that $\edge := T(\widehat \edge )$ with $\widehat \edge := \{\miny \leq y \leq \maxy, \; \minx \leq x \leq \maxx \} \subset [0,1]^2$ with $\miny, \maxy \in [0,1]^2$ and $\underline m_{\widehat x},\overline m_{\widehat x} \in \left(\mathcal{C}^0([\miny,\maxy],[0,1])\right)^2$.
Let $\alpha \in ]0,1[$ and $\bfx_{\edge,c_\alpha} \in \Omega \backslash \edge$ be such that the truncated cone
\begin{align*}
    C(\edge,c_\alpha):=\left\{ t \bfx + (1-t) \bfx_{\edge,c_\alpha}, \; \bfx \in \edge, \; t \in [\alpha,1] \right\},
\end{align*}
is contained in $K$.
Then, for all $f \in H^1(K)$ and $\varepsilon > 0$, the following inequality holds:
\begin{align*}
    \norm{f}_{L^2(\edge)}^2 \leq \frac{1}{\alpha^2 a(\edge,\bfx_{\edge,c_\alpha}) d(\edge,\bfx_{\edge,c_\alpha})} \left( \frac{1 + \varepsilon }{1-\alpha}\norm{f}_{L^2(C(\edge,c_\alpha))}^2 + \frac{D(\edge,\bfx_{\edge,c_\alpha})^2}{\varepsilon}   \norm{\nabla f}_{L^2(C(\edge,c_\alpha))}^2 \right),
\end{align*}
where 
\begin{align*}
    d(\edge,\bfx_{\edge,c_\alpha}) & = \inf_{\bfx \in \edge}|\bfx - \bfx_{\edge,c_\alpha}|, \\
    D(\edge,\bfx_{\edge,c_\alpha}) & = \sup_{\bfx \in \edge}|\bfx - \bfx_{\edge,c_\alpha}|, \\
    a(\edge,\bfx_{\edge,c_\alpha}) & = \inf_{\bfx \in \edge}\left| \frac{\bfx_{\edge,c_\alpha} - \bfx}{|\bfx_{\edge,c_\alpha} - \bfx|} \cdot \bfn_\edge(\bfx) \right|.
\end{align*}
\end{lmm}
\begin{proof}
We prove the lemma using a density argument, so that we consider in the following that $f \in C^1(K)$.
Let $(\widehat x,\widehat y) \in \widehat \edge$ and $g: [\alpha, 1] \to \R$ be the function defined by $g(t) = \frac{t - \alpha}{1- \alpha} |f(\gamma(t))|^2$ where $\gamma(t) = t T(\widehat x,\widehat y) + (1-t) \bfx_{\edge,c_\alpha}$.
We then have:
\begin{align*}
    |f(T(\widehat x,\widehat y))|^2 
    & = \int_{\alpha}^1 g'(t) \dd t \\
    & = \int_{\alpha}^1 \frac{1}{1-\alpha} |f(\gamma(t))|^2 \dd t + \int_{\alpha}^1 2 \frac{t-\alpha}{1-\alpha} \nabla_{\gamma(t)}f \cdot (T(\widehat x,\widehat y)-\bfx_{\edge,c_\alpha}) f(\gamma(t)) \dd t.
\end{align*}
Let us recall that for a function $h \in L^1(\edge)$, its integral on $\edge$ is defined as:
\begin{align*}
    \int_\edge h(\bfx) \dd \gamma(\bfx) = \int_{\miny}^{\maxy} 
    \int_{\minx}^{\maxx} h(T(\widehat x, \widehat y)) \left| \frac{d T(\widehat x, \widehat y)}{d \widehat x}  \wedge \frac{d T(\widehat x, \widehat y)}{d \widehat y} \right| \dd \widehat x \dd \widehat y.
\end{align*}
Taking $h = |f|^2$, it follows that
\begin{align*}
    \int_\edge |f(\bfx)|^2 \dd \gamma(\bfx) = I_1 + I_2,
\end{align*}
with
\begin{align*}
    I_1 & =  \frac{1}{1-\alpha} \int_{\miny}^{\maxy} \int_{\minx}^{\maxx} \int_{\alpha}^1 |f(\gamma(t))|^2  \left| \frac{d T(\widehat x,\widehat y)}{d \widehat x}  \wedge \frac{d T(\widehat x,\widehat y)}{d \widehat y} \right| \dd t \dd \widehat x \dd \widehat y , \\
    I_2 & = 2 \int_{\miny}^{\maxy} \int_{\minx}^{\maxx} \int_{\alpha}^1 \frac{t-\alpha}{1-\alpha} \nabla_{\gamma(t)}f \cdot (T(\widehat x,\widehat y)-\bfx_{\edge,c_\alpha}) f(\gamma(t))  \left| \frac{d T(\widehat x,\widehat y)}{d \widehat x}  \wedge \frac{d T(\widehat x,\widehat y)}{d \widehat y} \right| \dd t \dd \widehat x \dd \widehat y.
\end{align*}
The Jacobian of $\Psi: t, \widehat x,\widehat y \mapsto t T(\widehat x,\widehat y) + (1-t) \bfx_{\edge,c_\alpha}$, which maps $[\alpha,1] \times \widehat \edge$ onto the truncated cone $C(\edge,c_\alpha)$, is given as:
\begin{align*}
    \det J_{t,\widehat x,\widehat y} \Psi = t^2 (T(\widehat x,\widehat y)- \bfx_{\edge,c_\alpha}) \cdot \left(\frac{d T(\widehat x,\widehat y)}{d \widehat x}  \wedge \frac{d T(\widehat x,\widehat y)}{d \widehat y}\right),
\end{align*}
so that for all $t \in [\alpha,1]$ and $(\widehat x,\widehat y) \in \widehat \edge$, we have by supposition:
\begin{align*}
    \left|\det J_{t,\widehat x,\widehat y} \Psi \right| \left| \frac{d T(\widehat x,\widehat y)}{d \widehat x}  \wedge \frac{d T(\widehat x,\widehat y)}{d \widehat y} \right|^{-1}
    & = t^2 \left|(T(\widehat x,\widehat y)-\bfx_{\edge,c_\alpha}) \cdot \bfn_{\edge}(T(\widehat x,\widehat y))\right| \\
    & \geq t^2 a(\edge,\bfx_{\edge,c_\alpha}) d(\edge,\bfx_{\edge,c_\alpha}),
\end{align*}
where we used that $\bfn_{\edge}(T(\widehat x,\widehat y)) = \left(\frac{d T(\widehat x,\widehat y)}{d \widehat x}  \wedge \frac{d T(\widehat x,\widehat y)}{d \widehat y}\right) \left| \frac{d T(\widehat x,\widehat y)}{d \widehat x}  \wedge \frac{d T(\widehat x,\widehat y)}{d y} \right|^{-1}$.
Since $t \geq \alpha$, we deduce:
\begin{align*}
    I_1 
    & \leq \frac{1}{(1-\alpha)} \frac{1}{\alpha^2 a(\edge,\bfx_{\edge,c_\alpha}) d(\edge,\bfx_{\edge,c_\alpha})} \norm{f}_{L^2(C(\edge,c_\alpha))}^2 .
\end{align*}
Moreover, there holds:
\begin{align*}
    | I_2 | \leq 2 D(\edge,\bfx_{\edge,c_\alpha}) \int_{\miny}^{\maxy} \int_{\minx}^{\maxx} \int_{\alpha}^{1} | \nabla_{\gamma(t)}f | | f(\gamma(t)) | \left| \frac{d T(\widehat x,\widehat y)}{d \widehat x}  \wedge \frac{d T(\widehat x,\widehat y)}{d \widehat y} \right| \dd t \dd \widehat x \dd \widehat y.
\end{align*}
Using again the bound on the Jacobian of $\Psi$, together with the Cauchy-Schwarz inequality, we obtain:
\begin{align*}
    | I_2 | 
    & \leq 2 D(\edge,\bfx_{\edge,c_\alpha}) \frac{1}{\alpha^2 a(\edge,\bfx_{\edge,c_\alpha}) d(\edge,\bfx_{\edge,c_\alpha})} \int_{C(\edge,c_\alpha)} |\nabla_\bfx f| |f(\bfx)| \dd \bfx \\
    & \leq 2 D(\edge,\bfx_{\edge,c_\alpha}) \frac{1}{\alpha^2 a(\edge,\bfx_{\edge,c_\alpha}) d(\edge,\bfx_{\edge,c_\alpha})} \norm{\nabla f}_{L^2(C(\edge,c_\alpha))} \norm{f}_{L^2(C(\edge,c_\alpha))}.
\end{align*}
We have thus shown that:
\begin{align*}
    \norm{f}_{L^2(\edge)}^2 \leq \frac{1}{\alpha^2 a(\edge,\bfx_{\edge,c_\alpha}) d(\edge,\bfx_{\edge,c_\alpha})} \left( \frac{1}{1-\alpha}\norm{f}_{L^2(C(\edge,c_\alpha))}^2 + 2 D(\edge,\bfx_{\edge,c_\alpha})  \norm{\nabla f}_{L^2(C(\edge,c_\alpha))}\norm{f}_{L^2(C(\edge,c_\alpha))} \right).
\end{align*}
Applying the inequality $ab \leq \frac{\eps a^2}{2} + \frac{b^2}{2 \eps}$, we finally conclude that:
\begin{align*}
    \norm{f}_{L^2(\edge)}^2 \leq \frac{1}{\alpha^2 a(\edge,\bfx_{\edge,c_\alpha}) d(\edge,\bfx_{\edge,c_\alpha})} \left( \frac{1 + \varepsilon }{1-\alpha}\norm{f}_{L^2(C(\edge,c_\alpha))}^2 + \frac{D(\edge,\bfx_{\edge,c_\alpha})^2}{\varepsilon}   \norm{\nabla f}_{L^2(C(\edge,c_\alpha))}^2 \right),
\end{align*}
which concludes the proof.
\end{proof}

\begin{rmrk}
    When $K$ is the image of a reference element by a smooth mapping, $T$ can be taken as the restriction of the mapping to the corresponding face of the reference element.
\end{rmrk}

To recover a similar result to the trace inequality \eqref{eq:trace}, one would need to fix the values $\alpha$ and $\varepsilon$, and bound the quantities $d(\edge,\bfx_{\edge,c_\alpha})$, $D(\edge,\bfx_{\edge,c_\alpha})$ and $a(\edge,\bfx_{\edge,c_\alpha})$.
To fix ideas, one may take $\alpha = \varepsilon = 1/2$.
In particular, the truncated cone becomes:
\begin{align*}
    C(\edge,c):=\left\{ t \bfx + (1-t) \bfx_{\edge,c}, \; \bfx \in \edge, \; t \in [1/2,1] \right\}.
\end{align*}
Furthermore, we can restrict $\bfx_{\edge,c_\alpha}$ to be in $K\backslash \edge$, so that $D(\edge,\bfx_{\edge,c_\alpha}) \leq h_K$.

Let us now discuss the mesh regularity assumptions, and the resulting bound on the quantities $d(\edge,\bfx_{\edge,c_\alpha})$ and $a(\edge,\bfx_{\edge,c_\alpha})$.
First, we require the set
\begin{align*}
    C(K,\edge) := \left\{ C(\edge,c), \; \edge \in \edges(K), \;\bfx_{\edge,c} \in K\backslash\edge, \; C(\edge,c) \subset K \right\}
\end{align*}
to be non-empty.
For regular cells with non-curved faces, this hypothesis clearly holds because $\bfx_{\edge,c}$ can always be chosen as the cell mass center.
For cells with curved faces, one may reasonably expect that taking $\bfx_{\edge,c}$ closer to the face mass center achieves the desirable result.
Furthermore, note that this assumption holds if there exist $\bfx_{\edge,c}$ such that $K$ is star-shaped with regard to $\bfx_{\edge,c}$.
However, this hypothesis is not necessary, because the full cone with base $\edge$ and apex $\bfx_{\edge,c_\alpha}$ does not need to be inside $K$, as long as $C(\edge,c_\alpha) \subset K$.
Finally, to bound $d(\edge,\bfx_{\edge,c_\alpha})$ and $a(\edge,\bfx_{\edge,c_\alpha})$, we suppose (akin to the hypothesis made in \cite{can-21-hp}), that there exists constants $\cregconeI \in \R_{>0}$ and $\cregconeII \in \R_{>0}$ such that for all $K \in \mesh$, for all $\edge \in \edges(K)$, there exists $\bfx_{\edge,c}$ in $K$ such that $C(\edge,c) \in C(K,\edge)$ and
\begin{align}
     &d(\edge,\bfx_{\edge,c})^{-1} h_K \; \leq \; \cregconeI  \label{Poisson:eq:cregconeI}, \\
     & \cregconeII \; \leq \; a(\edge,\bfx_{\edge,c}). \label{Poisson:eq:cregconeII}
\end{align}

Note that $\cregconeI$ is a bound on the anisotropy of the cells, whereas $\cregconeII$ is a bound on the curvature of the faces, as shown in Figure \ref{Poisson:fig:creg_curve}.

\begin{figure}[!htb]
    \centering
    \definecolor{lightred}{rgb}{0.6,0.2,0}
    \begin{tikzpicture}[scale = 0.5 ,line cap=round,line join=round,x=1.0cm,y=1.0cm,
    declare function=
    {
    a(\t) = -0.6276*\t + 3.8828;
    c(\t) = 0.2;
    b(\t) =  c(0)*\t + 3.5-c(0)*0.61;
    d(\t) = 1.59337*\t + 2.528 ;} ]
    \draw [line width=1.2pt] (0,0) -- (4,0) -- (5, 4) -- (0,5) ;
    \draw[line width=1.2pt] (0,0) to [bend right] (0,5);
    \fill (3,2) circle[radius=3pt] ;
    \draw (3,2) node[below]{ $\boldsymbol x_{\edge,c}$};
    \fill (0.61,3.5) circle[radius=3pt] ;
    \draw (0.61,3.5) node[above right ]{ $\boldsymbol x$};
    \draw[->, green!50!black] (0.61,3.5) -- (1.5,{a(1.5)}) ;
    \draw[dashed] (0.61,3.5) -- (-0.28,{a(-0.28)}) ;
    \draw[color = red ] (0.61,3.5) -- (-0.04,{a(-0.04)});
    \draw[->, blue!80!black]  (0.61,3.5) -- (-0.4,{b(-0.4)}) ;
    \draw[dotted] (0.61,3.5) -- (2,{d(2)}) ;
    \draw[dotted] (0.61,3.5) -- (-0.78,{d(-0.78)}) ;
    \fill[fill=black,fill opacity=0.15] (0,0) -- (1.5,1) to [bend right] (1.5, 3.5) -- (0,5) to [bend left] (0,0) ;
    \draw (0.2,2.5) node[below] {$\sigma$};
    \end{tikzpicture}
    \caption{
    Example of a curved face (shown here in two dimensions for clarity, although such cells are not considered in this work). 
    In grey, the truncated cone $C(\edge,c)$. 
    In green, the normalized vector $\bfx - \bfx_{\edge,c}$, in blue, the unit face normal at $\bfx$. 
    The length of the red segment is equal to the scalar product of these two vectors. 
    }
    \label{Poisson:fig:creg_curve}
\end{figure}

\bibliographystyle{abbrv}
\bibliography{bib}

\end{document}